\documentclass[12pt]{amsart}
\usepackage{amscd}
\usepackage{amsmath}
\usepackage{verbatim}

\usepackage[cmtip,arrow]{xy}

\usepackage{pb-diagram,pb-xy}

\textwidth 16cm
\textheight 22cm
\headheight 0.5cm
\evensidemargin 0.3cm
\oddsidemargin 0.2cm

\usepackage{amssymb}


\newtheorem{thm}[equation]{Theorem}
\newtheorem{prop}[equation]{Proposition}

\newtheorem{lemma}[equation]{Lemma}
\newtheorem{cor}[equation]{Corollary}

\theoremstyle{definition}
\newtheorem{rem}[equation]{Remark}
\newtheorem{example}[equation]{Example}

\newcommand{\Steen}{\operatorname{Sq}}

\newcommand{\rdim}{\operatorname{rdim}}

\renewcommand{\Im}{\mathop{\mathrm{Im}}}

\newcommand{\ind}{\mathop{\mathrm{ind}}}

\newcommand{\rk}{\mathop{\mathrm{rk}}}
\newcommand{\CH}{\mathop{\mathrm{CH}}\nolimits}

\newcommand{\SO}{\operatorname{\mathrm{SO}}}
\newcommand{\OO}{\operatorname{\mathrm{O}}}

\newcommand{\Ch}{\mathop{\mathrm{Ch}}\nolimits}

\newcommand{\res}{\mathop{\mathrm{res}}\nolimits}

\newcommand{\pr}{\operatorname{\mathit{pr}}}

\newcommand{\mult}{\operatorname{mult}}

\newcommand{\id}{\mathrm{id}}

\newcommand{\Z}{\mathbb{Z}}
\newcommand{\F}{\mathbb{F}}

\newcommand{\End}{\operatorname{End}}

\newcommand{\Aut}{\operatorname{Aut}}

\newcommand{\X}{\mathfrak{X}}
\newcommand{\Y}{\mathcal{Y}}

\newcommand{\pt}{\mathbf{pt}}

\newcommand{\disc}{\operatorname{disc}}

\marginparwidth 2.5cm

\newcommand{\compose}{\circ}

\renewcommand{\phi}{\varphi}

\newcommand{\mf}{\mathfrak}

\usepackage[hypertex]{hyperref}

\title
[Isotropy of involutions]
{Isotropy of orthogonal involutions}

\keywords
{Algebraic groups, involutions,
projective homogeneous varieties,
Chow groups and motives, Steenrod operations.
{\em Mathematical Subject Classification (2010):}
14L17; 14C25}

\author
[N. Karpenko with appendix by J.-P.Tignol]
{Nikita A. Karpenko\\
with an Appendix by Jean-Pierre Tignol}

\address
{Nikita A. Karpenko\\
UPMC Univ Paris 06\\
Institut de Math\'ematiques de Jussieu\\
F-75252 Paris\\
FRANCE}

\address
{{\it Web page:}
{\tt www.math.jussieu.fr/\~{ }karpenko}}

\email {karpenko {\it at} math.jussieu.fr}

\address
{Jean-Pierre Tignol,
Zukunftskolleg, Universit\"at Konstanz, D-78457 Konstanz,
Germany, and ICTEAM, Universit\'e
Catholique de Louvain, B-1348 Louvain-la-Neuve, Belgium}

\date
{January, 2010. Updated: December 2010}

\thanks
{The statement of Proposition \ref{maksim} has been tested
and Example \ref{example} has been detected
using the Maple {\em Chow Ring Package}
by S. Nikolenko, V. Petrov, N. Semenov, and K. Zainoulline.}

\begin{document}

\begin{abstract}
An orthogonal involution on a central simple algebra becoming isotropic over any splitting field of the algebra, becomes isotropic over a finite odd degree extension of the base field (provided that the characteristic of the base field is not $2$).
The proof makes use of a structure theorem for Chow motives with finite coefficients of projective homogeneous varieties, of incompressibility of
certain generalized Severi-Brauer varieties, and of Steenrod operations.
\end{abstract}

\maketitle




The main result of this paper is as follows:

\begin{thm}
\label{main}
Let $F$ be a field of characteristic not $2$,
$A$ a central simple $F$-algebra, $\sigma$ an orthogonal involution on $A$.
The following two conditions are equivalent:
\begin{enumerate}
\item
$\sigma$ becomes isotropic over
any splitting field of $A$;
\item
$\sigma$ becomes isotropic over some finite odd degree extension of the base field.
\end{enumerate}
\end{thm}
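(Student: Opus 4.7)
The direction $(2)\Rightarrow(1)$ is the elementary one. Over any splitting field $E$ of $A$, the algebra $A_E$ is isomorphic to $\End_E(V)$ for some vector space $V$, and $\sigma_E$ is adjoint to a nondegenerate quadratic form $q_E$ on $V$ (up to a scalar); in particular, isotropy of $\sigma_E$ is equivalent to isotropy of $q_E$. If $L/F$ is an odd degree extension over which $\sigma$ becomes isotropic, then $[LE:E]$ divides $[L:F]$, hence is odd, and $q_{LE}$ is isotropic. Springer's theorem on odd degree extensions of quadratic forms then forces $q_E$ itself to be isotropic, proving $(1)$.

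For the main direction $(1)\Rightarrow(2)$, I would argue by contradiction. Let $X:=X_1(A,\sigma)$ be the involution variety parametrizing isotropic right ideals in $A$ of reduced dimension $1$; then $\sigma$ becomes isotropic over an extension $E/F$ iff $X(E)\neq\emptyset$. Since $F(SB(A))$ is a generic splitting field of $A$, condition $(1)$ is equivalent to the existence of a rational point of $X$ over $F(SB(A))$. Condition $(2)$ is equivalent to the degree map $\CH_0(X)\to\Z$ having odd image. Assume this fails, so every closed point of $X$ has even degree.

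The next step is to exploit the $\F_2$-motive $M:=M(X;\F_2)$. By the structure theorem for Chow motives with finite coefficients of projective homogeneous varieties cited in the excerpt, $M$ decomposes into a direct sum of shifts of indecomposable \emph{upper motives} of projective homogeneous varieties for the same algebraic group $\PGO(A,\sigma)$. The upper motive $U(X)$ of $X$ itself is a direct summand of $M$, and under our assumption it is not the Tate motive $\F_2$. On the other hand, the rational point of $X$ over $F(SB(A))$ yields a rational section of $U(X)$ after base change to $F(SB(A))$; combined with standard Krull--Schmidt considerations, this should identify $U(X)_{F(SB(A))}$ with a summand of the motive of a generalized Severi--Brauer variety $SB_k(A)$ for a specific $k$.

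The crux, and the hardest part, is to convert this motivic information into a numerical contradiction. Here the plan is to apply the Steenrod operations $\Steen^\bullet$ on $\Ch(X\times X;\F_2)$ to the projector realizing $U(X)$, producing new modulo $2$ correspondences whose existence imposes binomial divisibility relations on the ranks of graded components of $U(X)$. Comparing these relations to the incompressibility (canonical $2$-dimension) of the relevant generalized Severi--Brauer variety $SB_k(A)$ should force a contradiction: the Steenrod-generated projector must have support of dimension strictly smaller than the canonical dimension permits. This contradiction then establishes $(2)$. The key obstacle I anticipate is pinning down the correct index $k$ and the correct degree in which to apply $\Steen$, both of which must be calibrated using the discrete invariants of $(A,\sigma)$.
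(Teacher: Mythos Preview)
Your $(2)\Rightarrow(1)$ is fine and matches the paper. For $(1)\Rightarrow(2)$, your outline names the right toolbox (upper motives, incompressibility of a generalized Severi--Brauer variety, Steenrod operations), but there is a concrete error and several essential steps are missing. The error is the choice of test variety: the involution variety $X_1(A,\sigma)$ has an $E$-point only when $A_E$ is \emph{split} and $\sigma_E$ is isotropic, so for nonsplit $A$ no odd-degree extension $L/F$ ever satisfies $X_1(L)\ne\emptyset$; hence ``$\deg\CH_0(X_1)$ odd'' does not translate condition~(2). The paper instead works with the orthogonal Grassmannian $\X=X(2^r;A,\sigma)$ of isotropic ideals of reduced dimension $2^r=\ind A$, for which $\X(L)\ne\emptyset$ is exactly isotropy of $\sigma_L$ when $[L:F]$ is odd.

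Beyond this, the argument is not just a matter of ``pinning down $k$''. The paper proceeds by induction on $\ind A$ (the case $\ind A=2$ being \cite{MR1850658}); it first reduces, by shrinking $V$, to the situation where the Witt index over the generic splitting field is exactly $2^r$; this is what forces the relevant summand of $M(\X)_{F(\X)}$ to be a shift of $M(\Y)$ with $\Y=X(2^{r-1};D)$, and the shift is precisely $n=(\dim\X-\dim\Y)/2$. Two further ingredients are indispensable and absent from your sketch: the projector on $\X$ realizing $M_\Y(n)$ must be \emph{symmetrized} (Proposition~\ref{symsym}), and the contradiction is obtained by computing $(\deg/2)(\pi_\X^2)$ in two ways---it equals $\rk M_\Y/2\equiv 1\pmod 2$ by \cite{upper}, while a vanishing result for Steenrod operations on the \emph{split} orthogonal Grassmannian (Proposition~\ref{gras}, which needs $r\ge 2$) forces it to be $0$. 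Your ``binomial divisibility'' and ``support smaller than canonical dimension'' heuristics do not capture this mechanism.
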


The proof of Theorem \ref{main} is given in the very end of the paper;
it makes use of Chow motives with finite coefficients, of incompressibility of certain projective homogeneous varieties,
and of Steenrod operations.
A sketch of the proof is given shortly below.

For $F$ with no finite field extensions of odd degree, Theorem \ref{main}
proves \cite[Conjecture 5.2]{MR1751923}.
(I learned this conjecture in 1994 from A. Wadsworth during a Luminy conference.)
For general $F$, the question whether condition $(2)$ implies isotropy of $\sigma$ over $F$ remains open.
Note that any orthogonal involution becomes isotropic over some $2$-primary field extension, so that the mentioned open question
is about existence of a rational point on a variety possessing a $0$-cycle of degree $1$.
Such a question can hardly be attacked by the methods of the paper, so that Theorem \ref{main} seems to be the best possible result
in this direction which can be achieved by such methods.

\bigskip
The general reference on central simple algebras and involutions is \cite{MR1632779}.

The implication $(2)\Rightarrow(1)$ is a consequence of the Springer theorem on quadratic forms.
We only prove the implication $(1)\Rightarrow(2)$.
Condition $(1)$ is equivalent to the condition that $\sigma$ becomes isotropic over some (and therefore any) generic
splitting field of the algebra, such as the function field of the Severi-Brauer variety of any central simple
algebra Brauer-equivalent to $A$.

We prove this theorem over all fields simultaneously using an induction on the index $\ind A$ of $A$.
The case of $\ind A=1$ is trivial.
The case of $\ind A=2$ is done in \cite{MR1850658} (with ``$\sigma$ is isotropic (over $F$)'' in place of condition (2)).
From now on we are assuming that $\ind A>2$.
Therefore $\ind A=2^r$ for some integer $r\geq2$.

Let us list our basic notation:
$F$ is a field of characteristic different from $2$;
$r$ is an integer $\geq2$;
$A$ is a central simple $F$-algebra of the index $2^r$;
$\sigma$ is an orthogonal involution on $A$;
$D$ is a central division $F$-algebra (of degree $2^r$) Brauer-equivalent to $A$;
$V$ is a right $D$-module  with an isomorphism $\End_D(V)\simeq A$;
$v$ is the $D$-dimension of $V$
(therefore $\rdim V=\deg A=2^r\cdot v$, where $\rdim V:=\dim_FV/\deg D$ is the reduced dimension of $V$);
we fix an orthogonal involution $\tau$  on $D$;
$h$ is a hermitian (with respect to $\tau$) form on $V$ such that the involution $\sigma$ is adjoint to $h$;
$\X=X(2^r;(V,h))$ is the variety of totally isotropic submodules in $V$ of the reduced dimension
$2^r$ which is isomorphic (via Morita equivalence) to the variety $X(2^r;(A,\sigma))$ of right totally isotropic ideals in $A$ of the same
reduced dimension;
$\Y=X(2^{r-1};D)$ is the variety of right ideals in $D$ of reduced dimension $2^{r-1}$.

We assume that the hermitian form $h$ (and therefore, the involution $\sigma$)
becomes isotropic over the function field of the Severi-Brauer variety $X(1;D)$ of $D$,
and we want to show that $h$ (and $\sigma$) becomes isotropic over a finite odd degree extension of $F$.
By \cite{isotropy}, the Witt index of $h$ (which coincides with the Witt index of $\sigma$)
over this function field is at least $2^r=\ind A$.
In particular, $v\geq2$.
If the Witt index is bigger than $2^r$, we replace $V$ by a submodule in $V$ of $D$-codimension $1$
(that is, of the reduced dimension $2^r(v-1)$) and we replace $h$
by its restriction on this new $V$.
The Witt index of $h_{F(X(1;D))}$ drops by at most $2^r$ or stays unchanged.
We repeat the procedure until the Witt index becomes equal to $2^r$
(we come down eventually to the Witt index $2^r$ because the Witt index is at most $2^r$ for $V$ with
$\dim_DV=2$).

If $\dim_DV=2$, then $h$ becomes hyperbolic over $F\big(X(1;D)\big)$.
Therefore, by the main result of \cite{hypernew}, $h$ is hyperbolic over $F$ and we are done.
By this reason, we assume that $\dim_DV\geq3$, that is, $v\geq3$.
In particular, the variety $\X$ is projective {\em homogeneous}
(in the case of $v=2$, the variety $\X$ has two connected components each of which is homogeneous).

\begin{rem}
Note that the hyperbolicity theorem (HT) of \cite{hypernew} is a formal consequence of Theorem \ref{main}.
Keeping the case $v=2$ (therefore avoiding the only point where we use HT) and
slightly modifying the sequel, one can get a proof of Theorem \ref{main} which does not rely on HT (see Remark \ref{rem12}).
This will give a new proof of HT which (although having much in common) is (at several points) essentially different from the original one.
\end{rem}

The variety $\X$ has an $F\big(X(1;D)\big)$-point and $\ind D_{F(\Y)}=2^{r-1}$.
Consequently, by the induction hypothesis, the variety
$\X_{F(\Y)}$ has an odd degree closed point.
We prove Theorem \ref{main} by showing that the variety $\X$ has an odd degree closed point.
Here is a sketch of the proof:

\begin{proof}[Sketch of Proof of Theorem \ref{main}]
We assume that the variety $\X$ (and therefore also $\X\times\X$) has no odd degree closed point and we are looking for a contradiction.
First we show that the Chow motive with coefficients in $\F_2$ of $\X$ contains a summand isomorphic to a shift of
the {\em upper} indecomposable summand $M_\Y$ of the motive of $\Y$ (Corollary \ref{upper r-1}),
where {\em upper} means that the $0$-codimensional Chow group of $M_\Y$ is non-zero.
(At this point we use the $2$-incompressibility of $\Y$ which is due to
\cite{MR2713320}.)
Moreover, the corresponding projector on $\X$ can be {\em symmetrized} (Proposition \ref{symsym}).
This makes it possible to compute the degree modulo $4$ of any integral representative of
the $0$-cycle class on $\X\times\X$, given by the value of the appropriate  Steenrod operation on this projector.
Namely (see Corollary \ref{corsim}), this degree is identified with the {\em rank} of $M_\Y$ and therefore is $2$ modulo $4$ by a result of
\cite{upper} (which is a consequence of the $2$-incompressibility of $\Y$ and a structure theorem for motives with finite coefficients of
projective homogeneous varieties established in \cite{upper} and generalized in \cite{outer};
this structure theorem says that any indecomposable summand of the motive of a projective $G$-homogeneous variety $X$,
where $G$ is semisimple affine algebraic group,
is isomorphic to the {\em upper} indecomposable summand of another projective $G$-homogeneous
variety $X'$ such that the Tits index of $G_{F(X')}$ contains the Tits index of $G_{F(X)}$).
On the other hand, a computation of  Steenrod operations on the split orthogonal grassmannian $\bar{\X}$
(Proposition \ref{gras}) allows one to show that
the above degree is $0$ modulo $4$.
This is the required contradiction.
\end{proof}

We need an enhanced version of \cite[Proposition 4.6]{hypernew}.
This is a statement about the {\em Grothendieck Chow motives} (see \cite[Chapter XII]{EKM}) with coefficients in a prime field $\F_p$
(which we shall apply to $p=2$).
We write $\Ch$ for Chow groups with coefficients in $\F_p$ and we write
$M(X)$ for the motive of a complete smooth $F$-variety $X$.
Saying ``sum of motives'', we always mean the direct sum.
We call $X$ {\em split}, if $M(X)$ is isomorphic to a sum of Tate motives (which are defined as shifts of the motive of a point),
and we call $X$ {\em geometrically split}, it it splits over an extension of the base field.
We say that $X$ satisfies {\em nilpotence principle}, if for any field extension $E/F$ the kernel of the change of field homomorphism
$\End M(X)\to \End M(X)_E$ consists of nilpotents.
Finally, $X$ is {\em $p$-incompressible}, if it is connected and for any proper closed subvariety $Y\subset X$, the degree of any closed point
on $Y_{F(X)}$ is divisible by $p$.

The base field $F$ may have arbitrary characteristic in this statement:

\begin{prop}
\label{prop}
Let $Y$ be a geometrically split,
geometrically irreducible $F$-variety satisfying the nilpotence principle and let
$X$ be a smooth complete $F$-variety.
Assume that there exists a field extension $E/F$ such that
\begin{enumerate}
\item
for some field extension $\overline{E(Y)}/E(Y)$,
the image of the change of field homomorphism $\Ch(X_{E(Y)})\to\Ch(X_{\overline{E(Y)}})$
coincides with the image of the change of field homomorphism
$\Ch(X_{F(Y)})\to\Ch(X_{\overline{E(Y)}})$;
\item
the $E$-variety $Y_E$ is $p$-incompressible;
\item
a shift of the upper indecomposable summand of $M(Y)_E$ is a summand of $M(X)_E$.
\end{enumerate}
Then the same shift of the upper indecomposable summand of $M(Y)$ is a summand of $M(X)$.
\end{prop}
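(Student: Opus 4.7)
The plan is to descend the motivic summand of hypothesis~(3) from $E$ to $F$, following (and enhancing) the method of \cite[Proposition 4.6]{hypernew}.

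First I clarify what must be proved over $F$. Since $Y$ is geometrically split and satisfies nilpotence, Krull--Schmidt gives a well-defined upper indecomposable summand $M_Y$ of $M(Y)$ over $F$. Hypothesis~(2) implies that $M(Y)_E$ itself is indecomposable, so $(M_Y)_E = M(Y)_E$; by nilpotence this forces $M_Y = M(Y)$, i.e., $Y$ is already $p$-incompressible over $F$. Thus it suffices to realize a shift $M(Y)(i)$ as a direct summand of $M(X)$ over $F$.

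From hypothesis~(3) I extract cycles $\alpha_E \in \Ch(X_E \times Y_E)$ and $\beta_E \in \Ch(Y_E \times X_E)$ (in the appropriate codimensions) whose composition gives $\id_{M(Y)(i)_E}$. Restricting each to the generic fiber of the projection to $Y_E$ produces classes in $\Ch(X_{E(Y)})$. Hypothesis~(1) then ensures that, after base change to $\overline{E(Y)}$, these classes coincide with images of classes from $\Ch(X_{F(Y)})$, which in turn lift through the surjection $\Ch(Y \times X) \twoheadrightarrow \Ch(X_{F(Y)})$ (and analogously on the $X \times Y$ side) to give cycles $\tilde\alpha \in \Ch(X \times Y)$ and $\tilde\beta \in \Ch(Y \times X)$ defined over $F$.

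These lifts define morphisms between $M(Y)(i)$ and $M(X)$ over $F$. Their composition $\tilde\alpha \compose \tilde\beta \in \End M(Y)(i)$ agrees with $\alpha_E \compose \beta_E = \id$ after base change to $\overline{E(Y)}$, by construction of the lifts via hypothesis~(1). The nilpotence principle for $Y$ then implies that $\tilde\alpha \compose \tilde\beta$ differs from $\id_{M(Y)(i)}$ by a nilpotent endomorphism, hence is invertible in $\End M(Y)(i)$. Pre-composing $\tilde\alpha$ with this inverse realizes $M_Y(i) = M(Y)(i)$ as a summand of $M(X)$ over $F$, as required.

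The main obstacle I expect is in the third step: the lifts $\tilde\alpha, \tilde\beta$ from cycles on $X_{F(Y)}$ back to cycles on the product varieties are not canonical, and one must verify that the composition genuinely matches the identity after base change to $\overline{E(Y)}$ --- not merely up to a cycle that becomes zero under restriction to the generic fiber of the projection to $Y$. This is precisely where the strength of hypothesis~(1), applied symmetrically to both $\alpha$ and $\beta$, is used; making this rigorous while preserving the rationality over $F(Y)$ is the technically delicate point distinguishing this enhanced proposition from \cite[Proposition 4.6]{hypernew}.
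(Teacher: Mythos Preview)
Your argument has a genuine gap in the very first step. You claim that hypothesis~(2), the $p$-incompressibility of $Y_E$, implies that the motive $M(Y)_E$ is indecomposable. This is false. A $p$-incompressible variety can have a highly decomposable motive: anisotropic Pfister quadrics are $2$-incompressible but their motives split into many shifts of the Rost motive; the generalized Severi--Brauer variety $\Y=X(2^{r-1};D)$ used later in the paper is $2$-incompressible yet $M(\Y)\neq M_\Y$ in general (indeed the paper needs the rank of $M_\Y$, not of $M(\Y)$, in Corollary~\ref{corsim}). Consequently your reduction ``it suffices to realize $M(Y)(i)$ as a summand of $M(X)$'' is unjustified, and the correspondences $\alpha_E,\beta_E$ you extract from~(3) compose to a \emph{projector} $\pi_E$ on $Y_E$ giving the upper summand, not to $\id_{M(Y)_E}$.

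The correct argument (which is what the original proof in \cite[Proposition~4.6]{hypernew} does, and which the paper simply cites) works with this projector throughout. After lifting $\alpha_E,\beta_E$ to $F$-rational correspondences $\tilde\alpha,\tilde\beta$ via hypothesis~(1) as you describe, the composition $\tilde\beta\compose\tilde\alpha$ is a self-correspondence of $Y$ whose \emph{multiplicity} is $1$, because multiplicity is read off from the restriction to the generic fibre over $Y$ and that restriction is controlled by~(1). One then takes a high power to obtain a projector of multiplicity~$1$ on $Y$; the $p$-incompressibility of $Y$ (inherited from $Y_E$) guarantees that the resulting summand contains the upper summand $M_Y$. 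This is where $p$-incompressibility is actually used, and your nilpotence-plus-invertibility endgame, which relies on the composition being the identity, does not go through without it.
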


\begin{proof}
We recall that this Proposition is an enhanced version of \cite[Proposition 4.6]{hypernew}.
The only difference with the original version is in the condition (1):
the field extension $E(Y)/F(Y)$ is assumed to be purely transcendental in the original version.
However, only the new condition (1), a consequence of the pure transcendentality, is used in the original proof.
\end{proof}

Everywhere below, the prime $p$ is $2$.
We are going to apply Proposition \ref{prop} (with $p=2$) to $Y=\Y$, $X=\X$, and $E=F(\X)$.
We do not know if
the field extension $E(\Y)/F(\Y)$ is purely transcendental because
we do not know whether the variety $\X_{F(\Y)}$ has a rational point
(we only know that this variety has an odd degree closed point).

Next we are going to check that conditions $(1)$--$(3)$ of Proposition \ref{prop} are satisfied for these $Y,X,E$.
We start with condition $(3)$ for which
we need a motivic decomposition of $X_E=\X_{F(\X)}$.
We have the decomposition of \cite{MR2110630} arising from the fact that
$\X(F(\X))\ne\emptyset$.
More generally, the ``same'' decomposition holds for $F(\X)$ replaced by any field $K/F$ with $\X(K)\ne\emptyset$.
Over such $K$, the hermitian form $h$ decomposes in the orthogonal sum of a hyperbolic
$D_K$-plane and a hermitian form $h'$ on a right $D_K$-module $V'$ with $\rdim V'=2^r(v-2)$.

It requires some work to derive the decomposition from the general theorem of \cite{MR2110630}.
We use a ready answer from \cite{MR1758562}, where the projective homogeneous varieties under the
{\em classical} semisimple affine algebraic groups have been treated:

\begin{lemma}[{\cite[Corollary 15.14]{MR1758562}}]
\label{X over F(X)}
$M(\X_K)\simeq$
$$
\bigoplus_{i,j}M\big(X(i,i+j;D_K)\times X(j;(V',h'))\big)\big(i(i-1)/2+j(i+j)+i(\rdim V'-j)\big),
$$
where $X(i,i+j;D_K)$ is the variety of flags given by a right ideal in the $K$-algebra $D_K$
of the reduced dimension $i$ contained
in a right ideal of the reduced dimension $i+j$
(this is a non-empty variety if and only if $0\leq i\leq i+j\leq\deg D$).
\end{lemma}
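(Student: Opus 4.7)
The plan is to realise the lemma as a special case of the general motivic decomposition for a projective homogeneous variety that has acquired a rational point. Since $\X(K)\neq\emptyset$, there is a totally isotropic $D_K$-submodule $L\subset V_K$ of reduced dimension $2^r$, free of $D_K$-rank one. A standard hyperbolisation argument for hermitian forms then yields an orthogonal decomposition $V_K=H\perp V'$, where $H=L\oplus L^*$ is a hyperbolic $D_K$-plane and $(V',h')$ is the residual hermitian form with $\rdim V'=2^r(v-2)$. This exhibits $\X_K$ as a projective homogeneous variety under an isotropic form of the orthogonal group, the parabolic stabilising the flag $L\subset L\oplus V'$ having Levi factor essentially $\GL_1(D_K)\times\OO(h')$.

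The next step is to stratify $\X_K$ by the relative position of a totally isotropic submodule $U$ with respect to this fixed filtration. After passing to a splitting field, attach to $U$ the invariants $i=\rdim(U\cap L)$ and $j=\rdim\pi(U)$, where $\pi\colon V_K\twoheadrightarrow V'$ is the projection along $H$. Total isotropy of $U$ together with the non-degenerate pairing between $L$ and $L^*$ forces $\pi(U)$ to be totally isotropic in $V'$ and forces the flag $U\cap L\subset U\cap L^\perp=(U\cap L)\oplus\pi(U)$ to have reduced dimensions $(i,i+j)$ with $i,j\geq0$ and $i+j\leq 2^r$. Forgetting $U$ and keeping only the pair $(U\cap L\subset U\cap L^\perp,\,\pi(U))$ defines a morphism from the $(i,j)$-stratum to the product $X(i,i+j;D_K)\times X(j;(V',h'))$ appearing in the statement.

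The heart of the matter is to verify that each such morphism is a Zariski-locally trivial affine bundle, and to compute its rank. Reconstructing $U$ from the given flag data amounts to choosing a $D_K$-linear splitting of a short exact sequence subject to an isotropy constraint; such splittings form an affine space of dimension $i(i-1)/2+j(i+j)+i(\rdim V'-j)$ by a tangent-space count, the three summands encoding respectively symmetric/self-dual data on the $L$-side, the coupling between $U\cap L^\perp$ and its lift to $L^*$, and the remaining freedom against $V'$. Since the motive of an affine bundle of rank $d$ is the motive of the base Tate-twisted by $d$, summing over admissible $(i,j)$ gives the announced decomposition. Alternatively one may invoke directly the theorem of \cite{MR2110630} specialised to type $D$, which is the route taken in \cite[Corollary 15.14]{MR1758562}; then the shifts appear from the length function on the relevant quotient of the Weyl group.

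The main obstacle is exactly this bookkeeping: showing that the stratum map is a Zariski-locally trivial affine bundle of the stated rank, rather than merely a constructible affine fibration, so that the cell-type decomposition lifts from $K_0$ to the category of Chow motives. Once the affine-bundle structure and its dimension are in hand, the assembly into the displayed motivic identity is essentially formal.
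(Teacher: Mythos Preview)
Your outline is correct and matches the approach of \cite[Corollary 15.14]{MR1758562}; the paper's own proof is nothing more than a pointer to that reference, noting that the motivic statement is established in the course of the proof there even though only a consequence is formally stated. So in substance you and the paper agree.

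One technical slip to repair: the flag you attach to $U$ must lie in $L\cong D_K$, but $U\cap L^\perp$ does not. The correct larger member is $U^\perp\cap L$, the annihilator in $L$ of the image of $U$ in $V_K/L^\perp\cong L^*$; its reduced dimension equals $\rdim(U\cap L^\perp)$, so your numerology survives. Correspondingly, the totally isotropic piece in $V'$ is the image of $U\cap L^\perp$ under the projection $L^\perp\to V'$, not $\pi(U)$ for the full projection $\pi\colon V_K\to V'$ (these can differ when $U$ has a nonzero $L^*$-component). The claimed splitting $U\cap L^\perp=(U\cap L)\oplus\pi(U)$ is likewise only a short exact sequence in general. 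With these corrections the affine-bundle verification and rank count go through as you describe.
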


\begin{proof}
Unfortunately, \cite[Corollary 15.14]{MR1758562} is not the above statement on
motives, but its consequence. However, the needed statement on motives is
actually proved in the proof of \cite[Corollary 15.14]{MR1758562}.
\end{proof}

In particular, a shift of the motive of the variety $\Y_{F(\X)}$ is a motivic summand of $\X_{F(\X)}$:
namely, the summand of Lemma \ref{X over F(X)} given by $i=2^{r-1}$ and $j=0$ (with $K=F(\X)$).
This summand has as the shifting number the integer
\begin{equation}
\label{def-n}
n:=2^{r-2}(2^{r-1}-1)+2^{2r-1}(v-2).
\end{equation}
We note that $\dim\X=2^{r-1}(2^r-1)+2^{2r}(v-2)$, $\dim\Y=2^{2r-2}$, and therefore
$$
n=(\dim\X-\dim\Y)/2.
$$

We have checked condition $(3)$ of Proposition \ref{prop} and we start checking condition $(2)$.
By
\cite{MR2713320}
(see \cite{upper} for a different proof and generalizations),
the variety $\Y_{F(\X)}$ is $2$-incompressible if (and only if) the division algebra $D$ remains division over
the field $F(\X)$.
This is  indeed the case:

\begin{lemma}
\label{index reduction}
The algebra $D_{F(X)}$ is division, that is,
$\ind D_{F(\X)}=\ind D$.
\end{lemma}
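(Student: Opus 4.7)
The aim is to show $\ind D_{F(\X)} = 2^r$. Since $\ind D_{F(\X)}$ is a power of $2$ dividing $\ind D$, it is equivalent to rule out $\ind D_{F(\X)} \leq 2^{r-1}$, i.e.\ to show that the generalized Severi-Brauer variety $\Y_{F(\X)} = X(2^{r-1};D)_{F(\X)}$ admits no closed point of odd degree.

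My plan is to apply the Merkurjev--Panin--Wadsworth index reduction formula, as extended to function fields of involution varieties (work of Merkurjev and Karpenko--Tignol among others). For the orthogonal grassmannian $\X = X(2^r;(A,\sigma))$ of isotropic right ideals of reduced dimension $2^r = \deg D$, this formula expresses $\ind D_{F(\X)}$ explicitly in terms of the indices $\ind(D\otimes D^{\otimes s})$ for varying $s\in\Z$. Using that $D$ has exponent $2$ (since it carries the orthogonal involution $\tau$), every class $[D\otimes D^{\otimes s}]$ is either trivial or equal to $[D]$, so the computation reduces to an elementary $\gcd$ calculation that I would verify returns $2^r$. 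The hypothesis $v\geq 3$ is essential here: it guarantees that $\X$ is a genuine non-degenerate twisted grassmannian (of $D$-rank-$1$ isotropic submodules), so that none of the cross terms in the formula yields an accidental splitting of $D$.

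As a fallback I would work directly from the motivic decomposition of $\X_{F(\X)}$ provided by Lemma~\ref{X over F(X)}. An odd-degree closed point on $\Y_{F(\X)}$ would have to be detected on one of the summands $X(i,i+j;D_{F(\X)})\times X(j;(V',h'))$ appearing there, and one can rule this out by tracing what the existence of such a point would imply for $\ind D$ over an odd-degree extension of $F$, invoking the $2$-incompressibility of the relevant generalized Severi-Brauer varieties of $D$ from \cite{MR2713320}. The main obstacle, in either route, is the arithmetic bookkeeping: one must verify that every potential summand or every combination of twists in the MPW-type formula does not conspire to produce a gcd smaller than $2^r$.
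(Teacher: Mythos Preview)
Your Plan~A via index reduction is a legitimate route, and the paper itself acknowledges that the lemma ``can be checked using the index reduction formulas'' of \cite{MR1415325} (inner case) and \cite{MR1628279} (outer case). But your sketch oversimplifies: for an orthogonal grassmannian the relevant formula does not merely involve tensor powers $D^{\otimes s}$; one must also account for the even Clifford algebra (inner case) or the discriminant extension (outer case), so the reduction to ``$D$ has exponent~$2$, hence every term is $1$ or $\ind D$'' is not correct as stated. The computation does go through, but the bookkeeping is genuinely more than what you describe, and it splits into two cases exactly as the paper indicates.

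The paper deliberately avoids that route in favor of an argument ``more internal'' to its motivic methods. Assuming $\ind D_{F(\X)} < \ind D$, one gets $\Y(F(\X)) \neq \emptyset$; combined with the standing hypothesis that $\X_{F(\Y)}$ has an odd-degree closed point, the theory of upper motives \cite[Corollary~2.15]{upper} forces the \emph{unshifted} $M_\Y$ to be a summand of $M(\X)$. Since $\Y$ is $2$-incompressible, base-changing to $F(\Y)$ then produces a Tate summand $\F_2(\dim\Y) = \F_2(2^{2r-2})$ in the complete decomposition of $\X_{F(\Y)}$. Now apply Lemma~\ref{X over F(X)} with $K = F(\Y)$ (\emph{not} $K = F(\X)$): the only summands that are not motives of anisotropic varieties sit at shifts $0$, $\dim\X$, and inside $M(\Y_{F(\Y)})(n)$. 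Since $0 < 2^{2r-2} < n$ (this is where $v\geq 3$ enters), Krull--Schmidt \cite{MR2264459} gives the contradiction. What this buys over MPW is uniformity (no inner/outer split) and self-containment within the paper's toolkit.

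Your fallback Plan~B is not coherent as written: Lemma~\ref{X over F(X)} decomposes $M(\X_K)$, not $M(\Y_K)$, so an odd-degree point on $\Y_{F(\X)}$ is not ``detected on one of the summands'' in any direct way. If the intent is to extract information about $\Y$ from the decomposition of $\X$, you need precisely the upper-motive step above, and the correct base field for the comparison is $F(\Y)$ rather than $F(\X)$.
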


\begin{proof}
Of course, the statement can be checked using the index reduction formulas of \cite{MR1415325}
(in the inner case, that is, in the case when the discriminant of $h$ is trivial) and of
\cite{MR1628279} (in the outer case).
However, we prefer to do it in a different way which is more internal with respect to the methods of this paper.

Assume that $\ind D_{F(\X)}<\ind D$.
Then $\Y(F(\X))\ne\emptyset$.
Since in the same time the variety $\X_{F(\Y)}$ has an odd degree closed point,
it follows (by the main property of the upper motives established in \cite[Corollary 2.15]{upper})
that the upper indecomposable motivic summand of $\Y$ is a motivic summand of $\X$.
This implies (because the variety $\Y$ is $2$-incompressible)
that the complete motivic decomposition of the variety $\X_{F(\Y)}$
contains the Tate summand $\F_2(\dim\Y)=\F_2(2^{2r-2})$.
On the other hand, all the summands of the motivic decomposition of Lemma \ref{X over F(X)} (applied to the field
$K=F(\Y)$) are shifts of the motives of anisotropic varieties besides the following three:
$\F_2$ (given by $i=j=0$), $\F_2(\dim\X)=\F_2\big(2^{r-1}(2^r-1)+2^{2r}(v-2)\big)$ (given by $i=2^r$ and $j=0$), and
$M(\Y_{F(\Y)})(n)$
(given by $i=2^{r-1}$ and $j=0$) with $n$ defined in (\ref{def-n}).
Here a variety is called anisotropic, if all its closed points are of even degree.
The motive of an anisotropic variety does not contain Tate summands by \cite[Lemma 2.21]{upper}.
Taking into account the Krull-Schmidt principle of \cite{MR2264459} (see also \cite[\S2]{outer}),
we get a contradiction because
$0<2^{2r-2}< n$ (the assumption $v\geq3$ is used here).
\end{proof}

We have checked condition $(2)$ of Proposition \ref{prop}.
It remains to check condition $(1)$.

\begin{lemma}
\label{odd}
Let $L/K$ be a finite odd degree field extension of a field $K$ containing $F$.
Let $\bar{L}$ be an algebraically closed field containing $L$.
Then
$$
\Im\big(\res_{\bar{L}/L}:\CH(\X_L)\to\CH(\X_{\bar{L}})\big)=
\Im\big(\res_{\bar{L}/K}:\CH(\X_K)\to\CH(\X_{\bar{L}})\big).
$$
\end{lemma}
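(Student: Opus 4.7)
\emph{Plan.} The containment $\supseteq$ is immediate from $\res_{\bar L/K}=\res_{\bar L/L}\circ\res_{L/K}$. For $\subseteq$, I take $\alpha\in\CH(\X_L)$ with image $\bar\alpha:=\res_{\bar L/L}(\alpha)\in\CH(\X_{\bar L})$ and plan to produce a preimage in $\CH(\X_K)$ via corestriction.

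The natural candidate is $\gamma:=\cores_{L/K}(\alpha)\in\CH(\X_K)$. Applying flat base change to the cartesian square attached to the finite flat morphism $\X_L\to\X_K$ of degree $d:=[L:K]$, I identify $\res_{\bar L/K}(\gamma)$ explicitly. In the separable case, $L\otimes_K\bar L\cong\bar L^d$ splits into one factor per $K$-embedding $\sigma:L\hookrightarrow\bar L$, giving
$$
\res_{\bar L/K}(\gamma)=\sum_\sigma\sigma^*(\alpha),
$$
where each $\sigma^*(\alpha)$ equals $g^*(\bar\alpha)$ for some $g\in\Aut(\bar L/K)$ (with $g=\id$ for the tautological inclusion, which produces $\bar\alpha$ itself). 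Any purely inseparable part of $L/K$ has $\Char F$-power degree (with $\Char F$ odd); there $L\otimes_K\bar L$ is an Artinian local algebra of length equal to that inseparable degree, and a direct computation yields $\res_{\bar L/K}(\gamma)$ as that degree times $\bar\alpha$.

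The crux is then to turn ``$\sum_\sigma\sigma^*(\alpha)\in\Im(\res_{\bar L/K})$'' into ``$\bar\alpha\in\Im(\res_{\bar L/K})$''. Since $\ind D=2^r$, the variety $\X$ is projective homogeneous under a semisimple group of type $B$ or $D$, and because $2^r$ is strictly less than half of $\rdim V=2^r v$ (we have $v\geq3$), one is off the maximal-isotropic component so the outer Dynkin-diagram involution acts trivially. Hence $\Aut(\bar L/K)$ acts trivially on $\CH(\X_{\bar L})$: every $\sigma^*(\alpha)=\bar\alpha$, and the computation collapses to $d\cdot\bar\alpha\in\Im(\res_{\bar L/K})$. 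Moreover, the Chow motive of $\X$ decomposes by Krull--Schmidt into $2$-primary indecomposable summands (once more because $\ind D=2^r$), forcing the cokernel of $\res_{\bar L/K}$ to be $2$-primary torsion. Multiplication by the odd integer $d$ is therefore bijective on this cokernel, and $\bar\alpha\in\Im(\res_{\bar L/K})$ follows.

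The main obstacle will be pinning down the two structural assertions precisely: triviality of the $\Aut(\bar L/K)$-action on $\CH(\X_{\bar L})$ (delicate because $\X$ may be an outer form of its split model) and $2$-primary torsionness of the cokernel of $\res_{\bar L/K}$. Both should flow from the $2$-primary nature of $\ind D$ together with the Tits index of the underlying semisimple group, but the argument will need to be tailored to the specific $\X$ appearing here.
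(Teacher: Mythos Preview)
Your overall strategy---corestriction from $L$ to $K$ combined with a $2$-primary splitting field and a coprimality argument---matches the paper's. The genuine gap is your assertion that $\Aut(\bar L/K)$ acts trivially on $\CH(\X_{\bar L})$ ``because one is off the maximal-isotropic component.'' This is false: for the split orthogonal Grassmannian $G=OG(m{+}1,2d{+}2)$ with $m<d$, any element of $\OO_{2d+2}\setminus\SO_{2d+2}$ acts on $\CH(G)$ by swapping the classes $Z_{d-m}$ and $Z'_{d-m}$ (this is stated and used in the proof of Proposition~\ref{gras} of the present paper). Hence in the outer case ($\disc h_K\neq1$) the Galois action is nontrivial, $I_K$ sits inside the proper subgroup $\CH(\X_{\bar L})^G$, and the cokernel of $\res_{\bar L/K}$ is not even torsion: it surjects onto the free abelian group $\CH(\X_{\bar L})/\CH(\X_{\bar L})^G$. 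Both of your ``structural assertions'' therefore fail as stated.

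The fix is the observation the paper makes: since $[L:K]$ is odd, $\disc h_L\neq1$ as well, so the Galois action---which factors through $\Gal\bigl(K(\sqrt{\disc h_K})/K\bigr)$---is already trivial on $I_L$. One then runs your corestriction computation to get $[L:K]\cdot I_L\subset I_K$, and uses a finite $2$-primary splitting field $K'/K$ of $\X_K$ (whose existence is elementary and more direct than invoking a Krull--Schmidt motivic decomposition) to get $[K':K]\cdot\CH(\X_{\bar L})^G\subset I_K$; coprimality of the odd $[L:K]$ and the $2$-power $[K':K]$ then yields $I_L\subset I_K$. In the inner case $\disc h_K=1$ the action is genuinely trivial on all of $\CH(\X_{\bar L})$, the cokernel really is $2$-primary torsion, and your argument goes through verbatim.
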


\begin{proof}
We write $I_L$ and $I_K$ for these images and we evidently have $I_K\subset I_L$.

Inside of $\bar{L}$, the variety $\X_K$ has
a finite $2$-primary splitting field $K'/K$.

If the discriminant $\disc h_K$ is trivial, then $[L:K]\cdot I_L\subset I_K$.
Since moreover $[K':K]\cdot\CH(\X_{\bar{L}})\subset I_K$ and $[K':K]$ is coprime with
$[L:K]$, it follows that $I_L\subset I_K$.

If $\disc h_K$ is non-trivial, then also $\disc h_L\ne1$ and
the group $G:=\Aut(\bar{L}/K)$, acting on $\CH(\X_{\bar{L}})$, acts trivially on $I_L$.
Therefore we still have $[L:K]\cdot I_L\subset I_K$.
Besides, $[K':K]\cdot\CH(\X_{\bar{L}})^G\subset I_K$, and it follows that $I_L\subset I_K$.
\end{proof}

We write $M_{\Y}$ for the upper indecomposable motivic summand of $\Y$.

\begin{cor}
\label{upper r-1}
$M_{\Y}(n)$ is a motivic summand of $\X$.
\end{cor}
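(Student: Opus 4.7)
My plan is to apply Proposition \ref{prop} with $p=2$, $Y=\Y$, $X=\X$, and $E=F(\X)$. The variety $\Y = X(2^{r-1};D)$ is a generalized Severi-Brauer variety, so it is geometrically split, geometrically irreducible, and satisfies the nilpotence principle; it thus remains to verify hypotheses (1)--(3) of the proposition. The conclusion of Proposition \ref{prop} will then produce $M_{\Y}(n)$ as a summand of $M(\X)$, where $n$ is the shift from Lemma \ref{X over F(X)}, which is exactly the statement to be proved.

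For hypothesis (3), I will invoke Lemma \ref{X over F(X)} with $K=F(\X)$: the summand indexed by $i=2^{r-1}$, $j=0$ is a shift by $n$ of $M(\Y_{F(\X)})$, and \emph{a fortiori} the shift by $n$ of its upper indecomposable summand is a summand of $M(\X)_E$. For hypothesis (2), I will combine Lemma \ref{index reduction} (which guarantees that $D_{F(\X)}$ remains division) with \cite{MR2713320} to conclude that the generalized Severi-Brauer variety $\Y_{F(\X)}$ is $2$-incompressible.

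The hard part will be hypothesis (1): for a suitable extension $\overline{F(\X)(\Y)}$ of $F(\X)(\Y)$, the images of $\Ch(\X_{F(\X)(\Y)})$ and $\Ch(\X_{F(\Y)})$ in $\Ch(\X_{\overline{F(\X)(\Y)}})$ must coincide. The inclusion $F(\Y)\subset F(\X)(\Y)$ gives one containment for free; the reverse requires real work. My plan is to use the induction hypothesis (which supplies a finite odd-degree extension $L/F(\Y)$ with $\X(L)\neq\emptyset$) together with Lemma \ref{odd}. Concretely, I will take $\overline{F(\X)(\Y)}$ to be an algebraic closure of $L(\X)$. A standard specialization argument, using the $L$-rational point of $\X_L$, identifies the image of $\Ch(\X_{L(\X)})$ in $\Ch(\X_{\overline{L(\X)}})$ with the image of $\Ch(\X_L)$. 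Then Lemma \ref{odd}, applied to the odd-degree extension $L/F(\Y)$ with algebraically closed field $\overline{L(\X)}$, descends that common image to the image of $\Ch(\X_{F(\Y)})$. Since the change-of-field map $\Ch(\X_{F(\X)(\Y)})\to\Ch(\X_{\overline{L(\X)}})$ factors through $\Ch(\X_{L(\X)})$, chaining these identifications yields hypothesis (1).

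Once all three hypotheses are in place, Proposition \ref{prop} directly delivers $M_{\Y}(n)$ as a summand of $M(\X)$, which is the corollary.
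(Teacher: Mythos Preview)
Your proposal is correct and follows essentially the same approach as the paper: apply Proposition~\ref{prop} with $p=2$, $Y=\Y$, $X=\X$, $E=F(\X)$, verifying (2) via Lemma~\ref{index reduction} and \cite{MR2713320}, (3) via Lemma~\ref{X over F(X)}, and (1) via the odd-degree extension $L/F(\Y)$ with $\X(L)\neq\emptyset$ together with Lemma~\ref{odd}. The paper phrases your ``specialization argument'' as the observation that $L(\X)/L$ is purely transcendental (since $\X_L$ has a rational point) and that $E(\Y)\subset L(\X)$, but the content is identical.
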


\begin{proof}
As planned,
we apply Proposition \ref{prop} to $p=2$, $Y=\Y$, $X=\X$, and $E=F(\X)$.
There exists a finite odd degree extension $L/F(Y)$ such that $X(L)\ne\emptyset$.
The field extension $L(X)/L$ is purely transcendental.
Since $E(Y)\subset L(X)$, condition $(1)$ is satisfied
(with $\overline{E(Y)}$ being an algebraically closed field containing $L(X)$) by Lemma \ref{odd}.

As already pointed out,
condition $(2)$ is satisfied by Lemma \ref{index reduction}, and
condition $(3)$ is satisfied by Lemma \ref{X over F(X)}.
\end{proof}

We need the following enhancement of Corollary \ref{upper r-1}:

\begin{prop}
\label{symsym}
There exists a \underline{{\em symmetric}} projector $\pi_\X$ on $\X$ such that the motive
$(\X,\pi_\X)$ is isomorphic to $M_\Y(n)$.
\end{prop}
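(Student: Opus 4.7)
The plan is to start from any projector $\pi \in \Ch_{\dim \X}(\X \times \X)$ with $(\X, \pi) \simeq M_\Y(n)$ (supplied by Corollary \ref{upper r-1}) and modify it into a symmetric projector $\pi_\X$ still cutting out the same motivic summand. Writing $\pi^t$ for the image of $\pi$ under the transposition of the two factors of $\X \times \X$, the approach rests on two ingredients: self-duality of the summand $M_\Y(n) \subset M(\X)$ (which will force $\pi^t$ to define a summand isomorphic to $M_\Y(n)$ as well), and the fact that this summand appears in $M(\X)$ with multiplicity exactly one.

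First I would verify self-duality. For the $2$-incompressible projective homogeneous variety $\Y_{F(\X)}$ (see Lemma \ref{index reduction}), the upper motive $M_\Y$ is self-dual up to the appropriate shift: $M_\Y^\vee \simeq M_\Y(-\dim \Y)$, because the Poincaré polynomial of $M_\Y$ over a splitting field is symmetric around $\dim \Y / 2$. Since $n = (\dim \X - \dim \Y)/2$ is precisely the ``middle'' shift, dualising $(\X, \pi) \simeq M_\Y(n)$ and applying Poincaré duality for $\X$ yields $(\X, \pi^t) \simeq M_\Y(n)$.

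Next I would check multiplicity one. For this I examine the decomposition of $M(\X_{F(\X)})$ from Lemma \ref{X over F(X)}: among its summands, only the one with $(i,j) = (2^{r-1}, 0)$ can carry a copy of $M_\Y(n)$, as the remaining summands are either shifts of motives of anisotropic varieties (whose motives contain no Tate summands by \cite[Lemma 2.21]{upper}) or live at Tate shifts incompatible with the apex of $M_\Y(n)$. Combined with the Krull--Schmidt principle of \cite{MR2264459}, this forces the multiplicity of $M_\Y(n)$ in $M(\X)$ to be exactly one.

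The main obstacle is the symmetrization itself. Multiplicity one implies the existence of a unit $u \in \End M(\X)$ conjugating $\pi$ to $\pi^t$. The naive averaging $(\pi + \pi^t)/2$ is unavailable in $\F_2$-coefficients, so one must exploit the local structure of $\End M_\Y(n)$ (a finite local $\F_2$-algebra with residue field $\F_2$) to adjust $u$ so that the resulting conjugate of $\pi$ becomes transposition-invariant. The obstruction class lies in a small subquotient of this local ring and can be trivialized by a suitable modification of $u$, producing the sought symmetric idempotent $\pi_\X$ with $(\X, \pi_\X) \simeq M_\Y(n)$.
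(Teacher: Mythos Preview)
Your proposal has a genuine gap in the symmetrization step. You assert that a unit $u\in\End M(\X)$ conjugating $\pi$ to $\pi^t$ exists and that an unspecified ``obstruction class'' in a subquotient of the local ring $\End M_\Y(n)$ can be trivialized to yield a symmetric idempotent, but no actual mechanism is given. Conjugation alone does not produce symmetry: $u\pi u^{-1}=\pi^t$ is just $\pi^t$ again; to obtain $\rho=\rho^t$ with $(\X,\rho)\simeq(\X,\pi)$ by conjugation one would need some $v$ with $(v^t v)\,\pi\,(v^t v)^{-1}=\pi^t$, i.e.\ a factorization of $u$ as $v^t v$ modulo the centralizer of $\pi$, and nothing in your outline explains why such $v$ should exist. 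Your justification for self-duality in step~1 is also too weak: symmetry of the Poincar\'e polynomial over a splitting field does not by itself force $M_\Y^\vee\simeq M_\Y(-\dim\Y)$ at the motivic level; the correct reason is that $2$-incompressibility of $\Y$ makes its upper and lower indecomposable summands coincide.

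The paper bypasses these difficulties with an explicit construction. First it produces a symmetric projector $\pi_\Y$ on $\Y$ itself: starting from any $\pi$ giving $M_\Y$, one sets $\pi_\Y=(\pi^t\compose\pi)^{\compose l}$ for suitable $l$; $2$-incompressibility gives $\mult\pi^t=1$, hence $\mult\pi_\Y=1$, so $(\Y,\pi_\Y)$ is a nonzero summand of the indecomposable $(\Y,\pi)$ and therefore all of $M_\Y$. Then, with $\alpha:(\Y,\pi_\Y)(n)\to M(\X)$ and $\beta$ in the other direction realizing the summand, the paper tests the explicit symmetric candidates $(\alpha\compose\alpha^t)^{\compose l}$ and $(\beta^t\compose\beta)^{\compose l}$ on $\X$: if either $\mult(\alpha^t\compose\alpha)$ or $\mult(\beta\compose\beta^t)$ is nonzero, the corresponding candidate is the desired $\pi_\X$. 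The remaining case is excluded by a multiplicity-one count over $F(\Y)$ (not $F(\X)$ as in your step~2): one exhibits two orthogonal copies of $\F_2(n)$ inside $M(\X_{F(\Y)})$, contradicting the analysis at the end of the proof of Lemma~\ref{index reduction}. This is concrete where your step~3 is only a hope.
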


\begin{rem}
In fact, for {\em any} projector $\pi_\X$ on $\X$ such that the motive
$(\X,\pi_\X)$ is isomorphic to $M_\Y(n)$, the motive $(\X,\pi_\X^t)$ given by the transposition $\pi_\X^t$
of $\pi_\X$ is {\em isomorphic} to $(\X,\pi_\X)$.
However, $\pi_\X$ is not necessarily symmetric, that is, the equality $\pi_\X^t=\pi_\X$ may fail.
\end{rem}

\begin{proof}[Proof of Proposition \ref{symsym}]
Let us start by checking that the motive $M_\Y$ can be given by a symmetric projector $\pi_\Y$
on $\Y$.
The proof we give is valid for any projective homogeneous $2$-incompressible variety in place of the variety $\Y$.
Let $\pi$ be a projector on $\Y$ such that $(\Y,\pi)\simeq M_\Y$.
Since our Chow groups are with finite coefficients,
there exists an integer $l\geq1$ such that $\pi_\Y:=(\pi^t\compose\pi)^{\compose l}$ is a (symmetric) projector,
where $\pi^t$ is the transposition of $\pi$.
Since the variety $\Y$ is $2$-incompressible, $\mult\pi^t=1$ by \cite[\S2]{upper}, where $\mult$ is the multiplicity
(sometimes also called {\em degree} in the literature) of a correspondence.
It follows that $\mult\pi_\Y=1$
and therefore the motive $(\Y,\pi_\Y)$
is non-zero.
In the same time, it is a direct summand of the indecomposable
motive $(\Y,\pi)$ (the morphisms to and from $(\Y,\pi)$ having the identical composition are given, for instance, by
$\pi\compose\pi_\Y$ and simply $\pi_\Y$).
Therefore $M_\Y\simeq (\Y,\pi_\Y)$ by indecomposability of $(\Y,\pi)$.

Now let $\alpha:(\Y,\pi_\Y)(n)\to M(\X)$ and $\beta: M(\X)\to (\Y,\pi_\Y)(n)$ be morphisms with
$\beta\compose\alpha=\pi_\Y=\id_{(\Y,\pi_\Y)}$
(existing because $(\Y,\pi_\Y)(n)$ is a motivic summand of $\X$).
Note that $\alpha^t$ is a morphism
$$
M(\X)\to(\Y,\pi^t_\Y)(\dim\X-\dim\Y-n)=(\Y,\pi_\Y)(n)
$$
because $\pi^t_\Y=\pi_\Y$ and $2n=\dim\X-\dim\Y$.
There exists an integer $l\geq1$ such that $(\alpha^t\compose\alpha)^{\compose l}$ is a projector.
If  $\mult(\alpha^t\compose\alpha)\ne0$, then
$(\alpha^t\compose\alpha)^{\compose l}=\pi_\Y$.
Therefore $(\alpha\compose\alpha^t)^{\compose l}$ is a (symmetric) projector on $\X$ and
$\alpha:(\Y,\pi_\Y)(n)\to\big(\X,(\alpha\compose\alpha^t)^{\compose l}\big)$ is an isomorphism of motives,
so that we are done in this case.

Similarly, if $\mult(\beta\compose\beta^t)\ne0$, then $\beta^t:(\Y,\pi_\Y)(n)\to\big(\X,(\beta^t\compose\beta)^{\compose l}\big)$
for some (other) $l$ is an isomorphism, and we are done in this case also.

In the remaining case we have
$
\mult(\alpha^t\compose\alpha)=0=\mult(\beta\compose\beta^t)
$.
Let $\pt\in\Ch_0(\Y_{F(\Y)})$ be the class of a rational point.
The compositions $\alpha\compose([\Y_{F(\Y)}]\times\pt)\compose\beta$ and
$\beta^t\compose([\Y_{F(\Y)}]\times\pt)\compose\alpha^t$ are orthogonal projectors on $\X_{F(\Y)}$,
and each of two corresponding motives is isomorphic to $\F_2(n)$.
It follows that the complete motivic decomposition of $\X_{F(\Y)}$ contains two exemplars of $\F_2(n)$.
However, as shown in the end of the proof of Lemma \ref{index reduction},
the complete motivic decomposition of $\X_{F(\Y)}$ contains only one exemplar of $\F_2(n)$
(because the motive of $\Y_{F(\Y)}$ contains only one exemplar of $\F_2$).
\end{proof}

From now on we are assuming that all closed points on the variety $\X$ have even degrees.
Then all closed points on the product $\X\times\X$ also have even degrees.
Therefore the homomorphism $\deg\!/2:\Ch_0(\X\times\X)\to\F_2$ is defined
(as in \cite[\S5]{hypernew}).

\begin{cor}
\label{corsim}
Let $\pi_\X$ be as in Proposition \ref{symsym}.
Then $\pi_\X^2$ is a $0$-cycle class on $\X\times\X$ for which we have
$
(\deg\!/2)(\pi_\X^2)=1\in\F_2$.
\end{cor}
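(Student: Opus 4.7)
The plan is to verify that $\pi_\X^2 = \pi_\X \cdot \pi_\X$ is indeed a $0$-cycle (immediate, since $\pi_\X \in \Ch^{\dim\X}(\X\times\X)$ forces $\pi_\X^2 \in \Ch^{2\dim\X}(\X\times\X) = \Ch_0(\X\times\X)$), and then to compute $\deg(\tilde\pi_\X^2)$ modulo $4$ for a judiciously chosen integral lift $\tilde\pi_\X \in \CH^{\dim\X}(\X\times\X)$; showing this equals $2\pmod 4$ gives $(\deg\!/2)(\pi_\X^2) = 1$. Note that this degree modulo $4$ is independent of the lift, since altering $\tilde\pi_\X$ by $2\gamma$ changes $\tilde\pi_\X^2$ by a multiple of $4$. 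In particular, I am free to pick a \emph{symmetric} integral lift, and this is the crucial step: inspecting the proof of Proposition \ref{symsym}, $\pi_\X$ always comes in the form $(\gamma \compose \gamma^t)^{\compose l}$ for some $\F_2$-correspondence $\gamma$ (namely $\alpha$ or $\beta^t$) and $l \geq 1$; lifting $\gamma$ to any integral $\tilde\gamma$ and setting $\tilde\pi_\X := (\tilde\gamma \compose \tilde\gamma^t)^{\compose l}$ produces such a lift with $\tilde\pi_\X^t = \tilde\pi_\X$. Then the standard trace--intersection identity for correspondences gives
$$
\deg(\tilde\pi_\X^2) \;=\; \deg(\tilde\pi_\X \cdot \tilde\pi_\X^t) \;=\; \operatorname{tr}_{M(\X)}(\tilde\pi_\X^{\compose 2}).
$$

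Next I would pass to a splitting field $\bar F$ of $\X$, over which $\CH(\X_{\bar F})$ is $\Z$-free of some finite rank $N$ and $\tilde\pi_\X$ acts as an integer $N\times N$ matrix. Its reduction modulo $2$ is the idempotent $\pi_\X$ whose image has rank $\rho := \rk(M_\Y)$ (from the splitting $(\X_{\bar F},\pi_\X) \simeq M_\Y(n)_{\bar F}$ and the invariance of rank under Tate twist), so the characteristic polynomial of $\tilde\pi_\X$ reduces modulo $2$ to $x^{N-\rho}(x-1)^\rho$. By Hensel's lemma in $\Z_2[x]$ it factors as $\chi_0(x)\chi_1(x)$ with $\chi_0 \equiv x^{N-\rho}$ and $\chi_1 \equiv (x-1)^\rho \pmod 2$; the roots of $\chi_0$ in $\overline{\Q_2}$ lie in the maximal ideal of $\overline{\Z_2}$, while those of $\chi_1$ have the form $1+\delta_i$ with $\delta_i$ in that maximal ideal.

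Finally, I would compute the power sum $\operatorname{tr}(\tilde\pi_\X^{\compose 2}) = \sum \lambda_i^2$ modulo $4$ by applying Newton's identities to each factor separately. For $\chi_0$, all elementary symmetric functions of the roots lie in $2\Z_2$, so $\sigma_1^2 - 2\sigma_2 \in 4\Z_2$; for $\chi_1$, the change of variables $y = x-1$ yields $\chi_1(y+1) \equiv y^\rho \pmod 2$, reducing the analysis of the $\delta_i$ to the previous situation and giving $\sum(1+\delta_i)^2 = \rho + 2\sum\delta_i + \sum\delta_i^2 \equiv \rho \pmod 4$. Combining the two contributions yields $\deg(\tilde\pi_\X^2) \equiv \rho \pmod 4$. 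Since by \cite{upper} the rank $\rho = \rk(M_\Y)$ is $\equiv 2 \pmod 4$, the corollary follows. The main obstacle is the $2$-adic valuation bookkeeping in this last step, but it is routine once the Hensel factorization has been established.
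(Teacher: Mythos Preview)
Your argument is correct and reaches the same key identity as the paper, namely $(\deg/2)(\pi_\X^2)\equiv\rk(\X,\pi_\X)/2\pmod 2$, after which both proofs finish by citing \cite{upper} for $\rk(M_\Y)\equiv 2\pmod 4$. The route to that identity, however, is genuinely different. The paper works over a splitting field with an explicit homogeneous basis $B$ of $\Ch(\bar\X)$ adapted to the decomposition $M(\bar\X)=(\bar\X,\bar\pi_\X)\oplus(\bar\X,\Delta-\bar\pi_\X)$, so that $\bar\pi_\X=\sum_{b\in B_\pi}b\times b^*$; it then lifts each $b,b^*$ to integral classes $\mathfrak b,\mathfrak b^*\in\CH(\bar\X)$, multiplies out $(\sum\mathfrak b\times\mathfrak b^*)\cdot(\sum\mathfrak b^*\times\mathfrak b)$, and checks directly that the degree is $\equiv\#B_\pi\pmod4$, using only that $\deg(\mathfrak b\cdot\mathfrak c^*)\equiv\delta_{bc}\pmod 2$ and that transposition preserves degree. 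Your approach instead converts the problem, via the trace--intersection identity, into the purely linear-algebraic fact that an integer matrix $A$ whose mod-$2$ reduction is an idempotent of rank $\rho$ satisfies $\operatorname{tr}(A^2)\equiv\rho\pmod 4$, which you then prove by Hensel factorization of the characteristic polynomial over $\Z_2$ and Newton's identities. Your method is more structural and transparently generalizes to other primes; the paper's is more elementary and stays entirely within Chow-group computations. One small simplification of your write-up: you do not actually need to manufacture a symmetric integral lift. For \emph{any} integral lift $\Pi$ of $\pi_\X$, the product $\Pi\cdot\Pi^t$ is already a lift of $\pi_\X^2$ (since $\pi_\X^t=\pi_\X$), and the same identity gives $\deg(\Pi\cdot\Pi^t)=\operatorname{tr}(\Pi^{\compose 2})$ directly, bypassing the inspection of Proposition~\ref{symsym}; this is in fact how the paper sets things up.
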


\begin{proof}
For any symmetric projector $\pi$ on $\X$, we have $(\deg\!/2)(\pi^2)=\rk(\X,\pi)/2\pmod{2}$,
where $\rk$ is the rank of the motive (the number of the Tate summands in the complete decomposition over a splitting field).
Indeed,
taking a complete motivic decomposition of $\bar{\X}$ (here and below $\bar{\X}$ is $\X$ over a splitting field of $\X$)
which is a refinement of the decomposition
$M(\X)\simeq(\X,\pi)\oplus(\X,\Delta_\X-\pi)$, we get
a homogeneous basis $B$ of $\Ch(\bar{\X})$  such that
$\bar{\pi}=\sum_{b\in B_\pi}b\times b^*$, where $B_\pi$ is a subset of $B$ and $\{b^*\}_{b\in B}$ is the dual basis.
Note that $\rk(\X,\pi)=\# B_\pi$.
For every $b\in B$, let us fix an integral representative $\mathfrak{b}\in\CH(\bar{\X})$ of $b$
and an integral representative $\mathfrak{b}^*\in\CH(\bar{\X})$ of $b^*$.
Then the sum $\sum_{b\in B_\pi}\mathfrak{b}\times \mathfrak{b}^*$,
as well as the sum $\sum_{b\in B_\pi}\mathfrak{b}^*\times \mathfrak{b}$,
is an integral representative of $\bar{\pi}$,
and for the integral degree homomorphism $\deg:\CH_0(\bar{\X})\to\Z$ we have:
\begin{equation*}
\deg\left(\Big(\sum_{b\in B_\pi}\mathfrak{b}\times \mathfrak{b}^*\Big)\Big(\sum_{b\in B_\pi}
\mathfrak{b}^*\times \mathfrak{b}\Big)\right)\equiv\#B_\pi\pmod{4}.
\end{equation*}

By definition of $\deg\!/2$, the element $(\deg\!/2)(\pi^2)\in\F_2$ is represented by the half of the degree of an arbitrary
integral representative of $\pi^2$ (over $F$ !).
So, let $\Pi\in\CH(\X\times\X)$ be an integral representative of $\pi$ ($\Pi$ does not need to be a projector).
Then $\Pi\cdot\Pi^t$ is a representative of $\pi^2$, so that
we have $(\deg\!/2)(\pi^2)=(\deg(\Pi\cdot\Pi^t))/2\pmod{2}$.
On the other hand,
there exists an element $\alpha\in\CH(\bar{\X}\times\bar{\X})$ such that
$$
\sum_{b\in B_\pi}\mathfrak{b}\times \mathfrak{b}^*=\bar{\Pi}+2\alpha,
$$
and we get the following congruences modulo $4$:
$$
\#B_\pi\equiv \deg\big((\bar{\Pi}+2\alpha)\cdot(\bar{\Pi}^t+2\alpha^t)\big)\equiv
\deg(\Pi\cdot\Pi^t)
$$
because $\deg(\alpha\cdot\bar{\Pi}^t)=\deg(\bar{\Pi}\cdot\alpha^t)$.

We have shown that $(\deg\!/2)(\pi_\X^2)=\rk(\X,\pi_\X)/2\pmod{2}$.
Now the rank of the motive $(\X,\pi_\X)$ coincides with
the rank of the motive $(\Y,\pi_\Y)\simeq M_\Y$ which is shown to be
$2$ modulo $4$ in \cite[Theorem 4.1]{upper}.
\end{proof}

The following Proposition is a general statement on the action of the cohomological Steenrod operation $\Steen^\bullet$
(see \cite[Chapter XI]{EKM})
on the Chow groups modulo $2$
of a split orthogonal grassmannian $G$ (which we shall apply to $G=\bar{\X}$, where, as above,  $\bar{\X}$ is $\X$ over a splitting field of $\X$):

\begin{prop}
\label{gras}
Let $d$ be an integer $\geq1$,
$m$ an integer satisfying $0\leq m\leq d-1$,
$G$ the variety of the totally isotropic ($m+1$)-dimensional subspaces of a hyperbolic
($2d+2$)-dimensional quadratic form $q$ (over a field of characteristic $\ne2$).
Then
for any integer $i> (d-m)(m+1)$ we have
$\Steen^i\Ch_i(G)=0$.
\end{prop}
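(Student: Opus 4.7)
My plan is to reduce the vanishing to a statement about Wu classes of the tangent bundle and then exploit the orthogonal structure of the bundle $S^\perp/S$.

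Recall that for a smooth projective $F$-variety $X$ of dimension $n$ and any $y \in \Ch^{n-i}(X;\F_2)$, the Wu formula gives
\[
\deg\bigl(\Steen^i(y)\bigr) \;=\; \deg\bigl(v_i(T_X)\cdot y\bigr),
\]
where $v_i(T_X)\in \Ch^i(X;\F_2)$ is the $i$-th Wu class of $T_X$, determined by $\Steen(v(T_X))=c(T_X)$; explicitly, for a line bundle $L$ one has $v(L)=1+\sum_{k\geq 0}c_1(L)^{2^k}$ mod $2$, extended multiplicatively via the splitting principle. Since the split orthogonal Grassmannian $G$ is cellular, the Poincar\'e pairing $\Ch^i(G;\F_2)\otimes \Ch^{n-i}(G;\F_2)\to \F_2$ is perfect, so the desired $\Steen^i\Ch_i(G)=0$ is equivalent to
\[
v_i(T_G)=0 \in \Ch^i(G;\F_2) \quad\text{for every}\quad i>(d-m)(m+1).
\]

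First I compute $T_G$. From the tautological filtration $S\subset S^\perp\subset V_G$ with $V_G/S^\perp\cong S^*$ coming from $q$, and the standard computation $T_G = T_{\Gr(m+1,V)}|_G - N$ with normal bundle $N=\operatorname{Sym}^2 S^*$, one gets in $K$-theory
\[
T_G = \bigl(S^*\otimes (S^\perp/S)\bigr)\oplus \Lambda^2 S^*,
\]
so by multiplicativity of $v$, $\ v(T_G) = v(S^*\otimes S^\perp/S)\cdot v(\Lambda^2 S^*)$.

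Next I apply the splitting principle. Let $\xi_1,\dots,\xi_{m+1}$ be Chern roots of $S^*$; because $S^\perp/S$ is an orthogonal bundle, its Chern roots come in pairs $\pm\zeta_1,\dots,\pm\zeta_{d-m}$, so mod $2$ each $\zeta_k$ appears with multiplicity two. Using the mod~$2$ identities $v_L(-x)=v_L(x)$ and $v_L(x)^2=v_L(x^2)$, the main factor collapses to
\[
v\bigl(S^*\otimes (S^\perp/S)\bigr)\;\equiv\;\prod_{i=1}^{m+1}\prod_{k=1}^{d-m}v_L(\xi_i^2+\zeta_k^2)\pmod 2,
\]
an expression having the formal shape of the total Wu class of a bundle of rank $(m+1)(d-m)$. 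This is the structural source of the threshold in the statement: the orthogonal pairing of the $\zeta_k$'s effectively halves the dominant summand of $T_G$ from rank $2(m+1)(d-m)$ to rank $(m+1)(d-m)$.

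Finally, combining with $v(\Lambda^2 S^*)=\prod_{i<j}v_L(\xi_i+\xi_j)$, I would conclude by reducing modulo the two fundamental relations in $\Ch^*(G;\F_2)$: the identity $c(S^*)^2\cdot c(S^\perp/S)=1$ (obtained from the triviality of $V_G$ together with $c(S)\equiv c(S^*)\pmod 2$), which eliminates the $\zeta_k$-dependence in favour of polynomials in the $\xi_i$'s; and the rank cutoff $c_j(S^\perp/S)=0$ for $j>2(d-m)$, which enforces the bound on codimension. Together these collapse the formal expansion of $v(T_G)$ onto monomials of codimension $\leq (d-m)(m+1)$.

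The main obstacle is this last step: explicitly tracking the polynomial bookkeeping in $\Ch^*(G;\F_2)$ to confirm that all residual terms in codimension exceeding $(d-m)(m+1)$ indeed collapse after the two sets of relations are applied. The orthogonal ``doubling'' explains heuristically why the bound comes out to $(d-m)(m+1)$ rather than the naive half-dimension, but verifying the cancellation rigorously in the Chow ring—particularly the contribution of the $\Lambda^2 S^*$ factor and its interaction with the $c(S^*)^{-2}=c(S^\perp/S)$ substitution—is the delicate part.
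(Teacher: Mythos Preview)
Your reduction via the Wu formula is correct: since $G$ is cellular and connected, $\Steen^i\Ch_i(G)=0$ is equivalent to $v_i(T_G)=0$, and your identification $T_G = (S^*\otimes S^\perp/S)\oplus\Lambda^2 S^*$ in $K$-theory is right. The manipulation $v_L(\xi_i+\zeta_k)v_L(\xi_i-\zeta_k)\equiv v_L(\xi_i^2+\zeta_k^2)$ is also correct. However, the proof is genuinely incomplete, and the heuristic you offer for the threshold is misleading: the Wu class of a rank-$r$ bundle does \emph{not} vanish above degree $r$ (already $v(L)=1+x+x^2+x^4+\cdots$ for a line bundle), so ``having the formal shape of the Wu class of a bundle of rank $(m{+}1)(d{-}m)$'' gives no a priori bound. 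The vanishing you want must come from the relations in $\Ch^*(G;\F_2)$, and you have not carried out that reduction---nor is it clear how to control the contribution of the $\Lambda^2 S^*$ factor, whose Wu class $\prod_{i<j}v_L(\xi_i+\xi_j)$ has no evident degree bound either. What you call ``the delicate part'' is in fact the entire content of the proposition.

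The paper's proof is completely different and avoids any direct calculation with Wu or Chern classes. It introduces explicit ring generators $w_1,\dots,w_{d-m},z_{d-m},\dots,z_{2d-m}$ of $\Ch(G)$, defines a \emph{level} filtration by counting $z$-factors in monomials, and shows (i) the relations $z_i^2=z_ic_i-z_{i+1}c_{i-1}+\cdots$ force every class to be linear in each $z$-generator, so classes of dimension $>(d-m)(m+1)$ have level $\le m$; (ii) $\Steen^\bullet$ preserves level; and (iii) every level-$m$ element of $\Ch_0(G)$ vanishes. Step (iii) is proved not by computation but by a specialization trick: one realizes $G$ as the base-change of the grassmannian of a \emph{generic} quadratic form over a subfield $F'$, observes that $2^l$ times any level-$l$ class in $\CH_0$ is rational over $F'$, and uses that all closed points of the generic grassmannian have degree divisible by $2^{m+1}$. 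The paper itself remarks that a direct computational proof ``seems to be very complicated''---and that is exactly the route your proposal would have to follow.
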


\begin{proof}
Let $Q$ be the projective quadric of $q$,
$\Phi$ the variety of flags consisting of a line contained in a totally isotropic ($m+1$)-dimensional
subspace of $q$, and $\pr_G:\Phi\to G$, $\pr_Q:\Phi\to Q$ the projections.
We write $h\in\CH^1(Q)$ for the (integral) hyperplane section class and we write
$l_i\in\CH_i(Q)$, where $i=0,\dots,d$, for the (integral) class of an $i$-dimensional linear subspace in $Q$
(for $i=d$ we {\em choose} one of the two classes, call it $l_d$, and write $l'_d$ for the other).
As in \cite[\S2]{Vishik-u-invariant}, we define the integral classes
$$
W_i\in\CH^i(G)\;\; \text{ for }\;\; i=1,\dots, d-m
\;\;\text{ by }\;\; W_i:=(\pr_G)_*\pr_Q^*(h^{m+i})
$$
and we define the integral classes
$$
Z_i\in\CH^i(G)\;\;\text{ for }\;\;
i=d\!-\!m,\dots,2d\!-\!m\;\;\text{ by }\;\;
Z_i=(\pr_G)_*\pr_Q^*(l_{2d-m-i}).
$$

The elements $W_1,\dots,W_{d-m},Z_{d-m},\dots,Z_{2d-m}$ generate the ring $\CH(G)$ by
\cite[Proposition 2.9]{Vishik-u-invariant}.
We call them {\em the generators of} $\CH(G)$.
We refer to $W_1,\dots,W_{d-m}$ as $W$-generators,
and we refer to $Z_{d-m},\dots,Z_{2d-m}$ as $Z$-generators.

Note that $Z_{d-m}=(\pr_G)_*\pr_Q^*(l_d)$.
We also set $Z'_{d-m}=(\pr_G)_*\pr_Q^*(l'_d)$.
Since $l_d+l'_d=h^d$, we have $Z_{d-m}+Z'_{d-m}=W_{d-m}$.

Note that any element of $\OO_{2d+2}(F)\setminus \SO_{2d+2}(F)$ gives an automorphism of $G$ such that the corresponding
automorphism of the ring $\CH(G)$ acts trivially on
all the generators but $Z_{d-m}$ which is interchanged with $Z'_{d-m}$.

For any $i\geq0$, let $c_i\in\CH^i(G)$ be the $i$th Chern class of the quotient bundle on $G$.
According to \cite[Proposition 2.1]{Vishik-u-invariant}, $c_i=W_i$ for any $i$ for which $W_i$ is defined, and
$c_i=2Z_i$ for all $i$ satisfying $d-m<i\leq 2d-m$.

A computation similar to \cite[(86.15)]{EKM} (see also \cite[(44) and (45) in Theorem 3.2]{Kresch}) shows that
for any $i=d-m,\dots,2d-m$, the generators of $\CH(G)$ satisfy the following relation
$$
Z_i^2-Z_ic_i+Z_{i+1}c_{i-1}-Z_{i+2}c_{i-2}+\dots.
$$
(This is not and we do not need a complete list of relations.)

We denote the images of the generators of $\CH(G)$ under the epimorphism $\CH(G)\to\Ch(G)$ to the modulo $2$ Chow group using the
small letters $w$ and $z$ (with the same indices), and call them {\em the generators of} $\Ch(G)$.
We say that an element of $\Ch(G)$ is of level $l$, if it can be written as a sum of products of generators such that the number of the $z$-factors
in each product is at most $l$ (so, any level $l$ element is also of level $l+1$).
A $z$-generator raised to power $k$ is counted $k$ times here, that is, we are looking at the total degree assigning weight $1$
to each $z$-generator  (and weight $0$ to each $w$-generator).
For instance, the monomial $z_d^2$ is of level $2$
(but because of the relation $z_d^2=z_dc_d-z_{d+1}c_{d-1}+\dots$, the element $z_d^2$ is also of level $1$).

By \cite[Proposition 2.8]{Vishik-u-invariant}, the value of the total cohomological Steenrod operation
$\Steen^\bullet:\Ch(G)\to\Ch(G)$ on any single $z$-generator is of level $1$.
Similar computation shows that the value of $\Steen^\bullet$ on any $w$-generator is of level $0$.
Since $\Steen^\bullet$ is a ring homomorphism, it follows that for any $l\geq0$, the image under $\Steen^\bullet$ of a level $l$ element
is also of level $l$.

The above relations on the generators show that any element of $\Ch(G)$ is a polynomial of the generators such that
the exponent of any $z$-generator in any monomial of the polynomial is at most $1$.
Since the dimension of such (biggest-dimensional  level $m+1$) monomial $z_{d-m}\dots z_d$ is equal to
\begin{multline*}
\dim G-\Big((d-m)+\dots+d\Big)=\dim G - \Big((d-m)(m+1)+ m(m+1)/2\Big)=\\
(d-m)(m+1),
\end{multline*}
any homogeneous element $\alpha\in\Ch(G)$ of dimension $i>(d-m)(m+1)$ is of level $m$.
Therefore $\Steen^i(\alpha)\in\Ch_0(G)$ if also of level $m$.

We finish by showing that any level $m$ element in $\Ch_0(G)$ is $0$.
For this we turn back to the integral Chow group $\CH(G)$ and show that any odd degree element  $\beta\in\CH_0(G)$
is not of level $m$.
The integral version of the notion of level used here is defined in the same way as the above modulo $2$ version (using the generators of
$\CH(G)$ instead of the generators of $\Ch(G)$).

Since the description of the ring $\CH(G)$ does not depend on the base field $F$, we may assume that $G=G'_F$,
where $G'$  is the grassmannian of a {\em generic} quadratic form defined over a subfield $F'\subset F$.
We say that an element of $\CH(G)$ is {\em rational}, if it is in the image of the change of field homomorphism
$\res_{F/F'}:\CH(G')\to\CH(G)$.

For any $i\geq0$, the element $c_i$ is rational.
Therefore, for any $l\geq0$, the $2^l$-multiple of any level $l$ element in $\CH_0(G)$ is rational.
Indeed, this statement is a consequence of the formulas $W_i=c_i$ for any $i$ such that $W_i$ is defined,
and the formulas $Z_i+\sigma Z_i=c_i$ for any $i$ such that $Z_i$ is defined, where $\sigma$ is the ring automorphism of
$\CH(G)$ given by an element of $\OO_{2d+2}(F)\setminus\SO_{2d+2}(F)$ (note that $\sigma$ is the identity on $\CH_0(G)$).
The degree of any closed point on $G'$ is divisible by $2^{m+1}$.
Therefore the element $2^m\beta$ is not rational, and it follows that $\beta$ is not of level $m$.
\end{proof}

\begin{rem}
It might look strange that we are using the trick with the generic quadratic form proving a statement about a split quadratic form.
Indeed, the ring $\Ch(G)$ is completely described in terms of generators and relations  (moreover, the Pieri rule
\cite[Theorem 3.1]{Kresch} is obtained) and the action of the Steenrod operations is computed in terms of the generators.
However a direct proof based only on this information seems to be very complicated.
\end{rem}

\begin{rem}
\label{rem12}
The statement of Proposition \ref{gras} also holds in the case of $m=d$, that is, in the case of a split {\em maximal} orthogonal grassmannian.
The proof is even simpler and also the given proof of Proposition \ref{gras} can be easily modified to cover this case.
Using this, one can cover the case of $v=2$, excluded in the very beginning, and obtain this way a new proof for the hyperbolicity result
of \cite{hypernew}.
\end{rem}

\begin{cor}
\label{maksim}
For any integer $i\geq n$ (where $n$ is as in (\ref{def-n})) we have
$\Steen^i\Ch_i(\bar{\X})=0$.
\end{cor}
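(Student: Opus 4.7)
The plan is to recognize the corollary as a direct application of Proposition \ref{gras}, after translating the geometry of $\bar{\X}$ into the orthogonal grassmannian setup of that proposition.

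First I would identify $\bar{\X}$ explicitly. Over a splitting field of $\X$ (in the motivic sense of the introduction), the algebra $A$ splits and the involution $\sigma$ becomes hyperbolic, so via Morita equivalence the hermitian module $(V,h)$ corresponds to a hyperbolic quadratic form of dimension $\rdim V = 2^r v$, and the variety $\X$ of totally isotropic $D$-submodules of reduced dimension $2^r$ becomes the variety of totally isotropic $2^r$-dimensional subspaces. This matches the setup of Proposition \ref{gras} with the parameter choices
\[
m+1 = 2^r, \qquad 2d+2 = 2^r v,
\]
i.e.\ $m = 2^r - 1$ and $d = 2^{r-1} v - 1$. I would then briefly verify the hypothesis $0\leq m\leq d-1$, which amounts to $v \geq 2 + 2^{1-r}$; this holds because $r \geq 2$ and $v \geq 3$.

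Next I would compute the bound produced by Proposition \ref{gras}. With the above parameters,
\[
(d-m)(m+1) = \bigl(2^{r-1}v - 2^r\bigr)\cdot 2^r = 2^{2r-1}(v-2).
\]
Comparing with the definition $n = 2^{r-2}(2^{r-1}-1) + 2^{2r-1}(v-2)$ from (\ref{def-n}) gives
\[
n - (d-m)(m+1) = 2^{r-2}(2^{r-1}-1),
\]
which is strictly positive for every $r\geq 2$. Thus any $i\geq n$ satisfies $i > (d-m)(m+1)$.

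Finally I would invoke Proposition \ref{gras} with $G = \bar{\X}$ to conclude that $\Steen^i\Ch_i(\bar{\X}) = 0$ for every $i \geq n$, which is the desired statement. The argument is essentially bookkeeping; there is no genuinely hard step, the only point requiring care being the correct identification of the split orthogonal grassmannian $\bar{\X}$ with the parameters $(d,m)$ of Proposition \ref{gras} and the numerical verification $n > (d-m)(m+1)$.
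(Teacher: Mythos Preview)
Your proof is correct and follows essentially the same route as the paper's own argument: identify $\bar{\X}$ with the grassmannian $G$ of Proposition \ref{gras} via $d=2^{r-1}v-1$, $m=2^r-1$, check $m\le d-1$ using $v\ge3$, compute $(d-m)(m+1)=2^{2r-1}(v-2)$, and observe $n-(d-m)(m+1)=2^{r-2}(2^{r-1}-1)>0$ since $r\ge2$. The only addition is your explicit justification of why $\bar{\X}$ sits over a hyperbolic form, which the paper leaves implicit.
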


\begin{proof}
We apply Proposition \ref{gras} to $G=\bar{\X}$.
We have $d=2^{r-1}v-1$ and $m=2^r-1\leq d-1$ (because $v\geq3$).
Therefore $(d-m)(m+1)=2^{2r-1}(v-2)$ and
$$
n:=2^{r-2}(2^{r-1}-1)+2^{2r-1}(v-2)>(d-m)(m+1)
$$
(because $r\geq2$).
\end{proof}

\begin{example}
\label{example}
Corollary \ref{maksim} fails for $r=1$.
For instance, if $v=6$ (and therefore $d=5$), we have:
$n=8$, $z_4z_5\in\Ch^9(\bar{\X})=\Ch_8(\bar{\X})$, and $\Steen^8(z_4z_5)\ne0$.
Therefore, an additional argument is needed to prove the quaternion case
(fortunately already proved in \cite{MR1850658}) by the method of this paper.
\end{example}

\begin{proof}[Proof of Theorem \ref{main}]
We are going to show that $(\deg\!/2)(\pi_\X^2)=0$.
This will contradict to Corollary \ref{corsim} thus proving Theorem \ref{main}.

Since $\pi_\X^2=\Steen^{\dim\X}\pi_\X$, we have
$(\deg\!/2)(\pi_\X^2)=(\deg\!/2)(\Steen^\bullet\pi_\X)$.
Let $\alpha:M(\Y)(n)\to M(\X)$ and $\beta: M(\X)\to M(\Y)(n)$ be morphisms of motives with
$\alpha\compose\beta=\pi_\X$
and let
$$
\pr^{\X\Y\X}_{\X\X}:\X\times\Y\times\X\to\X\times\X
$$
be the projection.
Since $\alpha\compose\beta=(\pr^{\X\Y\X}_{\X\X})_*\big(([\X]\times\alpha)\cdot(\beta\times[\X])\big)$, we have
$$
\Steen^\bullet\pi_\X=
(\pr^{\X\Y\X}_{\X\X})_*\Big(\big([\X]\times\Steen^\bullet(\alpha)\big)\cdot\big(\Steen^\bullet(\beta)\times[\X]\big)\cdot
\big([\X]\times c_\bullet(-T_\Y)\times[\X]\big)\Big),
$$
where $T_\Y$ is the tangent bundle of $\Y$ and $c_\bullet$ is the total Chern class modulo $2$.
Let $\mf{a}$ and $\mf{b}$ be integral representatives of $\Steen^\bullet(\alpha)$ and
$\Steen^\bullet(\beta)$.
It suffices to show that the degree of the integral cycle class
$$
\mf{d}:=(\pr^{\X\Y\X}_{\X\X})_*\Big(\big([\X]\times\mf{a}\big)\cdot\big(\mf{b}\times[\X]\big)\cdot
\big([\X]\times \mf{c}_\bullet(-T_\Y)\times[\X]\big)\Big)
$$
is divisible by $4$, where $\mf{c}_\bullet$ stands for the {\em integral} total Chern class.

We have
\begin{multline*}
(\pr^{\X\Y\X}_{\Y\X})_*\Big(\big([\X]\times\mf{a}\big)\cdot\big(\mf{b}\times[\X]\big)\cdot
\big([\X]\times \mf{c}_\bullet(-T_\Y)\times[\X]\big)\Big)=\\
\mf{a}\cdot\big((\pr^{\X\Y}_\Y)_*(\mf{b})\times[\X]\big)\cdot \big(\mf{c}_\bullet(-T_\Y)\times[\X]\big)
\end{multline*}
and
\begin{multline*}
(\pr^{\Y\X}_\Y)_*\Big(
\mf{a}\cdot\big((\pr^{\X\Y}_\Y)_*(\mf{b})\times[\X]\big)\cdot \big(\mf{c}_\bullet(-T_\Y)\times[\X]\big)
\Big)=\\
(\pr^{\Y\X}_\Y)_*(\mf{a})\cdot(\pr^{\X\Y}_\Y)_*(\mf{b})\cdot \mf{c}_\bullet(-T_\Y).
\end{multline*}
Therefore
$$
\deg(\mf{d})=\deg\Big((\pr^{\Y\X}_\Y)_*(\mf{a})\cdot(\pr^{\X\Y}_\Y)_*(\mf{b})\cdot \mf{c}_\bullet(-T_\Y)\Big)
$$
and it suffices to show that the cycle classes $(\pr^{\Y\X}_\Y)_*(\bar{\mf{a}})$ and
$(\pr^{\X\Y}_\Y)_*(\bar{\mf{b}})$ are divisible by $2$.

The (modulo $2$) cycle class $\bar{\alpha}$ is a sum of $a'\times a$ with
some $a'\in\Ch(\bar{\Y})$ and some
homogeneous $a\in\Ch(\bar{\X})$ of dimension
$\geq n$.
By Corollary \ref{maksim}, $\deg\Steen^\bullet(a)=0\in\F_2$ for such $a$.
Therefore
$(\pr^{\Y\X}_\Y)_*\big(\Steen^\bullet(\bar{\alpha})\big)=0$ and
the integral cycle class $(\pr^{\Y\X}_\Y)_*(\bar{\mf{a}})$,
which represents the modulo $2$ cycle class $(\pr^{\Y\X}_\Y)_*\big(\Steen^\bullet(\bar{\alpha})\big)$,
is divisible by $2$.
Similarly,
the cycle class $\bar{\beta}$ is a sum of $b\times b'$ with
some $b'\in\Ch(\bar{\Y})$ and some
homogeneous $b\in\Ch(\bar{\X})$ of dimension
$\geq n$, and it follows that
the cycle class $(\pr^{\X\Y}_\Y)_*(\bar{\mf{b}})$ is also divisible by $2$.
\end{proof}


\section*{Appendix: Isotropy of symplectic and unitary involutions}
\begin{center}
by Jean-Pierre Tignol\footnote{This appendix was written while the
  author was a Senior Fellow of the Zukunftskolleg, Universit\"at
  Konstanz, whose hospitality is gratefully acknowledged.}
\end{center}
\bigbreak
\par
\setcounter{section}{0}
\renewcommand\thesection{\Alph{section}}
\newtheorem{thmapp}[section]{Theorem}

Using a technique from \cite{Tignol-appendix}, we derive from
Theorem~\ref{main} the following analogues for symplectic and unitary
involutions:

\begin{thmapp}
  \label{symplectic}
  Let $A$ be a central simple algebra over a field $F$ of
  characteristic different from~$2$ and let $\sigma$ be a symplectic
  involution on $A$. The following conditions are equivalent:
  \begin{enumerate}
  \item
  $\sigma$ becomes isotropic over every field extension $E$ of $F$
  such that $\ind A_E=2$;
  \item
  $\sigma$ becomes isotropic over some odd-degree field extension of
  $F$.
  \end{enumerate}
\end{thmapp}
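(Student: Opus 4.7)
The plan is to deduce Theorem~\ref{symplectic} from Theorem~\ref{main} via the classical device of tensoring a symplectic involution with the canonical (symplectic) involution of a quaternion algebra to produce an orthogonal one.

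\emph{Direction} $(2)\Rightarrow(1)$. Suppose $\sigma_L$ is isotropic for some odd-degree extension $L/F$, and let $E/F$ be any extension with $\ind A_E=2$. The compositum $EL$ has odd degree over $E$ (possibly trivial), and $\sigma_{EL}$ is isotropic. Since $\ind A_E=2$, by Morita equivalence $\sigma_E$ corresponds to a skew-hermitian form over a quaternion division $E$-algebra with its canonical involution, and the Bayer-Fluckiger--Lenstra odd-degree descent theorem for hermitian forms then gives that $\sigma_E$ is isotropic as well.

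\emph{Direction} $(1)\Rightarrow(2)$. If $\ind A=1$ then $\sigma$ is automatically hyperbolic, and if $\ind A=2$ then (1) applied with $E=F$ yields (2) with $L=F$. So assume $\ind A\geq4$. Then $A$ is not split, while the existence of $\sigma$ forces $[A]$ to have order dividing~$2$ in $\Br(F)$; the non-triviality of $\Br(F)[2]$ implies, by Merkurjev's theorem, the existence of a division quaternion $F$-algebra $Q$. Fix one with canonical involution $\gamma$, and form
$$
(A',\sigma')\;:=\;(A\otimes_FQ,\;\sigma\otimes\gamma).
$$
As the tensor product of two symplectic involutions, $\sigma'$ is an orthogonal involution on the central simple $F$-algebra~$A'$.

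I would next verify that $\sigma'$ becomes isotropic over every splitting field of $A'$. Let $E$ split $A'$, so $[A_E]=[Q_E]$ in $\Br(E)$. If $Q_E$ splits, then $A_E$ is split and the symplectic involution $\sigma_E$ is hyperbolic (symplectic involutions on split algebras of even degree always are), so $\sigma'_E$ is hyperbolic too. If $Q_E$ is non-split, then $\ind Q_E=2$, forcing $\ind A_E=2$; hypothesis~(1) then gives $\sigma_E$ isotropic, whence $\sigma'_E$ is isotropic. Thus Theorem~\ref{main} applies to $(A',\sigma')$ and yields a finite odd-degree extension $L/F$ over which $(\sigma\otimes\gamma)_L$ is isotropic.

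The remaining step---descending from isotropy of $\sigma\otimes\gamma$ over $L$ to isotropy of $\sigma$ over an odd-degree extension of $F$---is the main obstacle. Since $[Q]$ has order dividing~$2$ in the Brauer group and $[L:F]$ is odd, $Q_L$ remains division. A Morita-theoretic comparison of the skew-hermitian form underlying $\sigma_L$ (over the division algebra $D_L$ with a suitable involution) with the hermitian form underlying $(\sigma\otimes\gamma)_L$ (over $D_L\otimes Q_L$) is expected to identify the anisotropic kernel of the latter with the ``tensor'' of the anisotropic kernel of $\sigma_L$ by $\gamma_L$, so that isotropy of $\sigma'_L$ forces isotropy of $\sigma_L$. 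This is the isotropy analogue of the hyperbolicity transfer of \cite{hypernew} and is where the technique borrowed from \cite{Tignol-appendix} is expected to enter decisively; if the direct comparison does not yield isotropy of $\sigma$ over $L$ itself, one may need a further odd-degree base change along the lines of the Witt-index reduction carried out in the main text.
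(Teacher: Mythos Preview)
Your strategy of tensoring with a quaternion algebra to pass from symplectic to orthogonal and then invoking Theorem~\ref{main} is exactly the paper's strategy, and your verification that $\sigma'=\sigma\otimes\gamma$ becomes isotropic over every splitting field of $A'=A\otimes Q$ is correct. The divergence---and the genuine gap---is in the choice of $Q$ and, consequently, in the descent step you leave open.

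You pick an arbitrary division quaternion algebra $Q$ over $F$ and hope that isotropy of $(\sigma\otimes\gamma)_L$ forces isotropy of $\sigma_L$. This is not known, and there is no general ``anisotropic kernel of a tensor product equals tensor product of anisotropic kernels'' statement of the kind you invoke; for a quaternion factor $Q$ that interacts with the Brauer class of $A$ (say $Q$ shares a splitting field with the underlying division algebra of $A$), $\sigma\otimes\gamma$ can become isotropic for reasons having nothing to do with $\sigma$. The reference you cite, \cite{Tignol-appendix}, does not supply such a Morita-theoretic transfer either: the technique there is precisely the one the paper uses, and it requires a very specific $Q$.

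Namely, the paper passes to $\widehat F=F((x))((y))$ and takes $Q=(x,y)_{\widehat F}$, a quaternion algebra which is ``generic'' and comes equipped with a canonical Henselian valuation. After Theorem~\ref{main} produces an odd-degree extension $L/\widehat F$ with $(\sigma\otimes\gamma)_L$ isotropic, the valuation extends to $L$ and to a gauge $w$ on $\widetilde A_L=A\otimes_F(x,y)_L$ whose residue ring is exactly $A_{\overline L}$ with involution $\sigma_{\overline L}$; an isotropic element for $\widetilde\sigma_L$ is normalized to $w$-value $0$ and its residue witnesses isotropy of $\sigma$ over the odd-degree extension $\overline L/F$. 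None of this residue machinery is available for an arbitrary $Q$ over $F$, so your setup cannot be completed as stated: the fix is not to fill in the Morita comparison you sketch, but to change $Q$ to the Laurent-series quaternion algebra and run the valuation argument.
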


If $A$ is split, then (1) is void and (2) always holds since
symplectic involutions on split algebras are adjoint to alternating
forms.

\begin{thmapp}
  \label{unit}
  Let $B$ be a central simple algebra of exponent~$2$ over a field $K$
  of characteristic different from~$2$, let $\tau$ be a unitary
  involution on $B$, and let $F\subset K$ be the subfield of $K$ fixed
  under $\tau$. The following conditions are equivalent:
  \begin{enumerate}
  \item
  $\tau$ becomes isotropic over every field extension $E$ of $F$ such
  that $B\otimes_FE$ is split;
  \item
  $\tau$ becomes isotropic over some odd-degree field extension of $F$.
  \end{enumerate}
\end{thmapp}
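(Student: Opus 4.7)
The implication $(2)\Rightarrow(1)$ follows from Springer's theorem for hermitian forms. If $\tau$ becomes isotropic over an odd-degree extension $L/F$ and $E/F$ splits $B$, then the compositum $LE/E$ has odd degree, $\tau$ remains isotropic over $LE$, and Springer's theorem (applied to the hermitian form adjoint to $\tau_E$ over the extension $KE/E$) forces isotropy over $E$ itself.

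For $(1)\Rightarrow(2)$, the plan is to reduce to Theorem~\ref{main} via the descent technique of \cite{Tignol-appendix}. Write $\tau=\mathrm{ad}_h$ for a hermitian form $h$ on some right $B$-module $V$. Since $B$ has exponent~$2$ and admits a unitary involution, the corestriction class $\cores_{K/F}[B]\in\Br(F)$ vanishes; this opens the door to Galois descent along $K/F$. Using this vanishing, one associates to $(B,\tau,h)$ a central simple $F$-algebra $A$ equipped with an orthogonal involution $\sigma$ adjoint to an $F$-hermitian form $h'$, arranged so that:
\begin{enumerate}
\item[(i)] for every field extension $L/F$, isotropy of $\tau$ over $L$ and isotropy of $\sigma$ over $L$ are equivalent (or at the very least, each implies the other);
\item[(ii)] every splitting field of $A$ over $F$ is, or can be replaced without loss by, a splitting field of $B$ over $F$.
\end{enumerate}

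Granting (i) and (ii), hypothesis~(1) applied to $(B,\tau)$ yields condition~(1) of Theorem~\ref{main} for $(A,\sigma)$: any splitting field of $A$ splits $B$, hence makes $\tau$ isotropic by hypothesis, hence makes $\sigma$ isotropic by~(i). Invoking Theorem~\ref{main}, we obtain a finite odd-degree extension $L/F$ over which $\sigma$ becomes isotropic; (i) promotes this to isotropy of $\tau_L$, which is~(2).

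The principal obstacle lies in the construction of $(A,\sigma)$ together with the verification of (i) and (ii). The delicate points are to ensure that the descended involution is genuinely orthogonal (rather than symplectic), and that hermitian-form isotropy translates faithfully through the descent in both directions so that the splitting fields of $A$ and those of $B$ match up as required. This is a purely algebraic matter handled by the machinery of \cite{Tignol-appendix}; no further input from Chow motives or Steenrod operations is needed beyond what is already encoded in Theorem~\ref{main}.
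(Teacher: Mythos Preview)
Your outline for $(2)\Rightarrow(1)$ is fine. For $(1)\Rightarrow(2)$, however, the sketch has a real gap: the algebra $(A,\sigma)$ you posit does not exist over $F$ with the properties (i)--(ii) you list, and the machinery of \cite{Tignol-appendix} you invoke does not produce one.

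What \cite{Tignol-appendix} (and the paper's own proof) actually does is pass to a \emph{larger} base field. One chooses an orthogonal involution $\nu$ on $B$, sets $g=\nu\circ\tau$, and forms the twisted Laurent series ring $\widetilde B=B((\xi;g))$; its center is $F((x))$ for a suitable $x$, and it carries an orthogonal involution $\widetilde\tau$ extending $\tau$. Over the function field $E$ of the Severi--Brauer variety of $\widetilde B$ the algebra $B$ splits (since $B\otimes_F F((x))$ is the centralizer of $K$ in $\widetilde B$), so hypothesis~(1) makes $\tau_E$ isotropic and hence $\widetilde\tau_E$ isotropic. Theorem~\ref{main} then yields an odd-degree extension $L$ of $F((x))$---not of $F$---over which $\widetilde\tau$ is isotropic. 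The remaining work is a valuation-theoretic residue argument: one extends the $x$-adic valuation to $L$ and to a gauge on $\widetilde B_L$, and checks that an isotropic element for $\widetilde\tau_L$ specializes to one for $\tau$ over the residue field $\overline L$, which is odd-degree over $F$.

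Your framework does not match this. The constructed algebra lives over $F((x))$, not over $F$; the isotropy transfer is not a biconditional indexed by extensions $L/F$ as in your (i), but rather one implication going up (from $\tau$ over $E$ to $\widetilde\tau$ over $E$) and a separate residue passage coming back down; and there is no splitting-field comparison of the shape~(ii). The sentence ``this is a purely algebraic matter handled by the machinery of \cite{Tignol-appendix}'' papers over exactly the step---the gauge/residue descent from $L/F((x))$ to $\overline L/F$---that carries the content and needs to be written out.
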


In (1) it suffices to consider field extensions $E$ that are linearly
disjoint from $K$, since $\tau$ becomes isotropic over every field
extension containing $K$.

In each case, (2)~$\Rightarrow$~(1) readily follows from Springer's
theorem on the anisotropy of quadratic forms under odd-degree
extensions: symplectic involutions on central simple algebras of
index~$2$ are adjoint to hermitian forms $h$ over quaternion algebras,
which are isotropic if and only if the associated quadratic form
$h(x,x)$ is isotropic. Likewise, unitary involutions on split central
simple algebras are adjoint to hermitian forms over quadratic
extensions, and the same observation applies. Therefore, we just prove
(1)~$\Rightarrow$~(2).

\begin{proof}[Proof of Theorem~\ref{symplectic}]
  Since (2) holds when $A$ is split, we may assume $A$ is not
  split. Adjoining to $F$ two Laurent series indeterminates $x$, $y$,
  we let $\widehat F=F((x))((y))$ and consider the quaternion algebra
  $(x,y)_{\widehat F}$ with its conjugation involution $\gamma$. Let
  $\widetilde A= A\otimes_F(x,y)_{\widehat F}$ with the involution
  $\widetilde\sigma=\sigma\otimes\gamma$, which is orthogonal since
  $\sigma$ is symplectic. Let $E$ be the function field of the
  Severi--Brauer variety of $\widetilde A$. The Brauer group kernel of
  the scalar extension map from $\widehat F$ to $E$ is generated by
  $\widetilde A$, hence $A$ is not split over $E$; but $A_E$ is
  Brauer-equivalent to $(x,y)_E$, hence $\ind A_E=2$. Assuming~(1), we
  have $\sigma$ isotropic over $E$, hence $\widetilde\sigma$ also
  becomes isotropic over $E$ and therefore, by Theorem~\ref{main},
  there is an odd-degree extension $L$ of $\widehat F$ over which
  $\widetilde\sigma$ becomes isotropic. The $x,y$-adic valuation on
  $\widehat F$ (with value group $\mathbb{Z}^2$ ordered
  lexicographically from right to left) is Henselian, hence it extends
  to a valuation $v$ on $L$. To prove~(2), we show that $\sigma$
  becomes isotropic over the residue field $\overline{L}$, which is an
  odd-degree extension of $F$.

  Let $\Gamma=v(L^\times)\subset\mathbb{Q}^2$. Since $L$ is an
  odd-degree extension of $\widehat F$, the quaternion algebra
  $(x,y)_{\widehat F}$ remains a division algebra over $L$, hence $v$
  extends to a valuation on $(x,y)_L$ defined by
  \[
  v(q)=\textstyle\frac12 v\bigl(\operatorname{Nrd}(q)\bigr) \in
  \frac12\Gamma \cup\{\infty\} \qquad\text{for $q\in(x,y)_L$,}
  \]
  where $\operatorname{Nrd}$ is the reduced norm. Since
  $(\Gamma{:}\mathbb{Z}^2)$ is odd, we have $v(x)$, $v(y)\notin
  2\Gamma$, and the residue division algebra $\overline{(x,y)_L}$ is
  therefore easily checked to be $\overline{L}$. We further extend $v$
  to a map
  \[
  w\colon \widetilde A_L\to \textstyle\frac12\Gamma\cup\{\infty\}
  \]
  as follows: let $(a_i)_{i=1}^n$ be a base of $A$, so every element
  in $\widetilde A_L$ has a unique representation of the form $\sum_i
  a_i\otimes q_i$ for some $q_i\in(x,y)_L$; we set
  \[
  w(\textstyle\sum_i a_i\otimes q_i) = \min\{v(q_i)\mid 1\leq i\leq
  n\}.
  \]
  The map $w$ is not a valuation\footnote{The map $w$ is a
    \emph{gauge} in the terminology of [J.-P. Tignol, A.R. Wadsworth,
    Value functions and associated graded rings for semisimple
    algebras, \textit{Trans. Amer. Math. Soc.} \textbf{362} (2010)
    687--726].}
  ($\widetilde A_L$ is not a division algebra) but it satisfies
  $w(a+b) \geq \min\{w(a), w(b)\}$ and $w(ab)\geq w(a)+w(b)$ for $a$,
  $b\in \widetilde A_L$. (It is also easy to see that it does not
  depend on the choice of the base $(a_i)_{i=1}^n$.) We may therefore
  consider a residue algebra $(\widetilde A_L)_0$, which consists of
  residue classes of elements $a$ such that $w(a)\geq0$ modulo
  elements $a$ such that $w(a)>0$. Since
  $\overline{(x,y)_L}=\overline{L}$, we have $(\widetilde
  A_L)_0=A_{\overline{L}}$. Moreover,
  $w\bigl(\widetilde\sigma(a)\bigr) = w(a)$ for all $a\in\widetilde
  A_L$, and if $w(a)=0$ we have
  \[
  \overline{\widetilde\sigma(a)}=\sigma_{\overline{L}}(\overline{a}).
  \]
  Now, since $\widetilde\sigma_L$ is isotropic we may find a nonzero
  $a\in \widetilde A_L$ such that $\widetilde\sigma_L(a)\cdot
  a=0$. Multiplying $a$ on the right by a suitable quaternion in
  $(x,y)_L$, we may assume $w(a)=0$, hence $\overline{a}\in
  A_{\overline{L}}$ is defined and nonzero. We have
  \[
  \sigma_{\overline{L}}(\overline{a})\cdot \overline{a} =
  \overline{\widetilde\sigma_L(a)\cdot a} =0,
  \]
  hence $\sigma_{\overline{L}}$ is isotropic, as claimed.
\end{proof}

\begin{proof}[Proof of Theorem~\ref{unit}]
  As in \cite[A.2]{Tignol-appendix}, we choose an orthogonal involution $\nu$
  on $B$ and set $g=\nu\circ\tau$, which is an outer automorphism of
  $B$. Consider the algebra of twisted Laurent series $\widetilde
  B=B((\xi;g))$ in one indeterminate $\xi$. It carries an orthogonal
  involution $\widetilde\tau$ extending $\tau$ such that
  $\widetilde\tau(\xi)=\xi$. Let $u\in B^\times$ be such that
  $\nu(u)=\tau(u)=u$ and $g^2(b)=ubu^{-1}$ for all $b\in B$. The
  center of $\widetilde B$ is $F((x))$ where $x=u^{-1}\xi^2$. Let $E$
  be the function field of the Severi--Brauer variety of $\widetilde
  B$. Extension of scalars to $E$ splits $B$, since $B\otimes_FF((x))$
  is the centralizer of $K$ in $\widetilde B$. Therefore,
  assuming~(1), $\tau$ becomes isotropic over $E$, hence
  $\widetilde\tau$ also becomes isotropic over $E$. By
  Theorem~\ref{main}, it follows that there is an odd-degree extension
  $L$ of $F((x))$ over which $\widetilde\tau$ becomes isotropic. The
  $x$-adic valuation on $F((x))$ (with value group $\mathbb{Z}$)
  extends to a valuation $v$ on $L$ with value group $\Gamma\subset
  \mathbb{Q}$, and further to a map
  \[
  w\colon\widetilde B_L\to\textstyle\frac12\Gamma\cup\{\infty\}
  \]
  defined as follows: let $(b_i)_{i=1}^n$ be an $F$-base of $B$, so
  every element in $\widetilde B_L$ has a unique representation of the
  form $\sum_ib_i\otimes\ell_i +(\sum_j b_j\otimes\ell'_j)\xi$ for
  some $\ell_i$, $\ell'_j\in L$; set
  \[
  w(\textstyle\sum_ib_i\otimes\ell_i+(\sum_j b_j\otimes\ell'_j)\xi) =
  \min\{v(\ell_i), v(\ell'_j)+\frac12\mid 1\leq i,j\leq n\}.
  \]
  The corresponding residue ring $(\widetilde B_L)_0$ is
  $B\otimes_F\overline{L}$, and for $b\in \widetilde B_L$ with
  $w(b)=0$ we have $w\bigl(\widetilde\tau_L(b)\bigr)=0$ and
  $\overline{\widetilde\tau_L(b)} =
  \tau_{\overline{L}}(\overline{b})$. Since $\widetilde\tau_L$ is
  isotropic, we may find a nonzero $b\in\widetilde B_L$ such that
  $\widetilde\tau_L(b)\cdot b=0$. Multiplying $b$ on the right by a
  suitable power of $\xi$, we may assume $w(b)=0$, hence
  $\overline{b}\in B\otimes_F\overline{L}$ is defined and nonzero. We
  have
  \[
  \tau_{\overline{L}}(\overline{b})\cdot \overline{b} =
  \overline{\widetilde\tau_L(b)\cdot b} =0,
  \]
  hence $\tau_{\overline{L}}$ is isotropic. Note that $\overline{L}$
  is an odd-degree extension of $F$ since $L$ is an odd-degree
  extension of $F((x))$.
\end{proof}

In \cite[\S4]{MR1850658}, Parimala--Sridharan--Suresh give an example of a
central simple algebra $B$ with an anisotropic unitary involution that
becomes isotropic over an odd-degree extension $L$ of the field of
symmetric central elements. The algebra $B$ in this example has odd
exponent (and is split by $L$).


\def\cprime{$'$}

\end{document}